\newtheorem{theorem}{Theorem}
\newtheorem{lemma}{Lemma}
\newtheorem{remark}{Remark}
\newtheorem{proposition}{Proposition}
\newtheorem{definition}{Definition}
\newcommand{\rd}{\mathrm{d}}
\newcommand{\bse}{\boldsymbol{e}}
\newcommand{\bsk}{\boldsymbol{k}}
\newcommand{\bsl}{\boldsymbol{l}}
\newcommand{\bsq}{\boldsymbol{q}}
\newcommand{\bsr}{\boldsymbol{r}}
\newcommand{\bsw}{\boldsymbol{w}}
\newcommand{\bsx}{\boldsymbol{x}}
\newcommand{\bsy}{\boldsymbol{y}}
\newcommand{\bsz}{\boldsymbol{z}}
\newcommand{\bszero}{\boldsymbol{0}}
\newcommand{\bsone}{\boldsymbol{1}}
\newcommand{\bsalpha}{\boldsymbol{\alpha}}
\newcommand{\bsgamma}{\boldsymbol{\gamma}}
\newcommand{\bsmu}{\boldsymbol{\mu}}
\newcommand{\CC}{\mathbb{C}}
\newcommand{\NN}{\mathbb{N}}
\newcommand{\RR}{\mathbb{R}}
\newcommand{\ZZ}{\mathbb{Z}}
\newcommand{\ant}{\mathrm{ant}}
\newcommand{\bant}{b\mathchar`-\mathrm{ant}}
\newcommand{\bsym}{b\mathchar`-\mathrm{sym}}
\newcommand{\tr}{\mathrm{tr}}
\newcommand{\wal}{\mathrm{wal}}
\newcommand{\wor}{\mathrm{wor}}
\newcommand{\Bcal}{\mathcal{B}}
\newcommand{\Ncal}{\mathcal{N}}
\newcommand{\Pcal}{\mathcal{P}}
\newcommand{\Qcal}{\mathcal{Q}}
\DeclareMathOperator*{\argmin}{arg\,min}
\begin{document}

\title{Quasi-Monte Carlo integration using digital nets with antithetics\thanks{This work was supported by JSPS Grant-in-Aid for Young Scientists No.15K20964.}}

\author{Takashi Goda\thanks{Graduate School of Engineering, The University of Tokyo, 7-3-1 Hongo, Bunkyo-ku, Tokyo 113-8656, Japan (\tt{goda@frcer.t.u-tokyo.ac.jp})}}

\date{\today}

\maketitle

\begin{abstract}
Antithetic sampling, which goes back to the classical work by Hammersley and Morton (1956), is one of the well-known variance reduction techniques for Monte Carlo integration. In this paper we investigate its application to digital nets over $\ZZ_b$ for quasi-Monte Carlo (QMC) integration, a deterministic counterpart of Monte Carlo, of functions defined over the $s$-dimensional unit cube. By looking at antithetic sampling as a geometric technique in a compact totally disconnected abelian group, we first generalize the notion of antithetic sampling from base $2$ to an arbitrary base $b\ge 2$. Then we analyze the QMC integration error of digital nets over $\ZZ_b$ with $b$-adic antithetics. Moreover, for a prime $b$, we prove the existence of good higher order polynomial lattice point sets with $b$-adic antithetics for QMC integration of smooth functions in weighted Sobolev spaces. Numerical experiments based on Sobol' point sets up to $s=100$ show that the rate of convergence can be improved for smooth integrands by using antithetic sampling technique, which is quite encouraging beyond the reach of our theoretical result and motivates future work to address.
\end{abstract}
\emph{Keywords}:\; Quasi-Monte Carlo, antithetic sampling, digital nets, higher order polynomial lattices, Walsh functions\\
\emph{MSC classifications}:\; 65C05, 65D30, 65D32.

\section{Introduction}\label{sec:intro}
In this paper we study multivariate integration of real-valued functions defined over the $s$-dimensional unit cube $[0,1]^s$. For a Riemann integrable function $f: [0,1]^s \to \RR$, we denote by $I(f)$ the true integral of $f$, i.e.,
\begin{align*}
I(f) := \int_{[0,1]^s}f(\bsx)\, \rd \bsx .
\end{align*}
As an approximate evaluation of $I(f)$, we consider 
\begin{align*}
I(f;P) := \frac{1}{N}\sum_{n=0}^{N-1}f(\bsx_n) ,
\end{align*}
where $P=\{\bsx_0,\bsx_1,\ldots,\bsx_{N-1}\}\subset [0,1]^s$ is a finite point set. Here points are counted according to their multiplicity.

If one chooses the points $\bsx_0,\bsx_1,\ldots,\bsx_{N-1}$ independently and randomly from $[0,1]^s$, the approximation $I(f;P)$ is called \emph{Monte Carlo (MC) integration}. The central limit theorem states that, for any function $f\in L^2([0,1]^s)$, the random variable $\sqrt{N}(I(f;P)-I(f))$ converges in distribution to a normal distribution $\Ncal(0,\sigma^2(f))$ as $N\to \infty$, where $\sigma^2(f)$ denotes the variance of $f$, i.e.,
\begin{align*}
\sigma^2(f) = \int_{[0,1]^s}\left(f(\bsx)-I(f)\right)^2\, \rd \bsx .
\end{align*}
Thus the MC integration has a probabilistic error of order $N^{-1/2}$. Here the rate of convergence is independent of $s$, although the variance of $f$ may depend on $s$. One of the most prominent ways to improve the MC integration error is to attempt reducing the variance of $f$, see for instance \cite[Chapter~4]{Lembook}.

Among many others, the method of \emph{antithetic variates}, also called \emph{antithetic sampling}, introduced by Hammersley and Morton \cite{HM56} is one of the simplest and best-known techniques for variance reduction. This method proceeds as follows: Let $\bsone$ denote the vector of $s$ 1's. For an even number $N$, let $\bsx_0,\bsx_1,\ldots,\bsx_{N/2-1}$ be chosen independently and randomly from $[0,1]^s$. For each point $\bsx_n$, we define $\tilde{\bsx}_n:=\bsone -\bsx_n$. Then the MC integration with antithetic variates is given by $I(f;P_{\ant})$ with 
\begin{align}\label{eq:antithetic_points}
P_{\ant}=\{\bsx_0,\bsx_1,\ldots,\bsx_{N/2-1},\tilde{\bsx}_0,\tilde{\bsx}_1,\ldots,\tilde{\bsx}_{N/2-1}\}.
\end{align}
The central limit theorem states that, again for any function $f\in L^2([0,1]^s)$, the random variable $\sqrt{N}(I(f;P_{\ant})-I(f))$ converges in distribution to a normal distribution with mean $0$ and variance 
\begin{align*}
\sigma^2(f) + \int_{[0,1]^s}\left(f(\bsx)-I(f)\right)\left(f(\bsone-\bsx)-I(f)\right)\, \rd \bsx .
\end{align*}
It is now obvious that the MC integration with antithetic variates is superior to the plain MC integration if the latter term in the last expression is negative, although the probabilistic error of order $N^{-1/2}$ remains unchanged.

\emph{Quasi-Monte Carlo (QMC) integration} aims at improving the rate of convergence by replacing random sample points with deterministically chosen points which are uniformly distributed in $[0,1]^s$. In the classical QMC theory, this replacement has been often motivated by the Koksma-Hlawka inequality, which states that, for any function $f$ with bounded variation in the sense of Hardy and Krause, we have
\begin{align*}
\left| I(f;P)-I(f)\right| \le V_{\mathrm{HK}}(f) D^{*}(P),
\end{align*}
where $V_{\mathrm{HK}}(f)$ denotes the total variation of $f$ in the sense of Hardy and Krause, and $D^{*}(P)$ the star-discrepancy of $P$, see for instance \cite[Chapter~3]{Nbook}. Thus in order to make the integration error small, it suffices to find a good point set whose star-discrepancy is small. In fact, there are several known explicit constructions of point sets whose star-discrepancy is of order $N^{-1+\varepsilon}$ with arbitrarily small $\varepsilon>0$, see for instance \cite{Faure82,Halton60,Nied88,NXbook,Sobol67}. Since the term $V_{\mathrm{HK}}(f)$ does not affect the rate of convergence, the QMC integration error decays much faster than the MC integration error.

Since a point set is taken deterministically for QMC integration and the variance of $f$ does not come into play in the error estimate, it is largely unknown whether variance reduction techniques can provide any benefit to QMC integration. As far as the author knows, there are only a handful of papers on application of variance reduction techniques to QMC integration. These include importance sampling \cite{AD15,Chelson76,SM94}, control variates \cite{HLO05}, and a variant of antithetic sampling (named local antithetic sampling) \cite{Owen08}. Note that the last two cited papers deal with, instead of deterministic QMC integration, \emph{randomized QMC (RQMC) integration} which applies a randomizing transformation to point sets such that their essential equi-distribution property is preserved. Therefore, properly speaking, a point set is not taken completely deterministically therein.

In this paper we investigate a combination of deterministic QMC integration with antithetic sampling. We consider a special class of point sets called \emph{digital nets over $\ZZ_b$} for an integer base $b\ge 2$. Although digital nets are usually defined by using generating matrices where each column consists of only finitely many non-zero entries, such a definition does not suffice for our error analysis. This means that we have to permit infinite-column generating matrices, i.e., generating matrices whose each column can contain infinitely many non-zero entries. In fact, this issue has been recently discussed in \cite{GSYxx}.

By looking at antithetic sampling as a geometric technique in a compact totally disconnected abelian group, the original antithetic sampling as in (\ref{eq:antithetic_points}) can be combined quite well with digital nets over $\ZZ_2$ but not so much with digital nets over $\ZZ_b$ for $b\ge 3$. Based on an idea similar to that of \cite{GSY15,GSYxx} as well as \cite{Godaxx}, in which the notions of tent transformation and symmetrization are generalized from base $2$ to an arbitrary base $b\ge 2$, respectively, we first generalize the notion of antithetic sampling from base $2$ to an arbitrary base $b\ge 2$ in this paper. Then we analyze the QMC integration error of digital nets over $\ZZ_b$ with $b$-adic antithetics. This shall be done in Section~\ref{sec:banti}, which is the first contribution of this paper.

Using the result of Section~\ref{sec:banti}, we give one example of how the use of $b$-adic antithetics brings a noticeable benefit to QMC integration. In particular, we prove the existence of higher order polynomial lattice point sets with $b$-adic antithetics which achieve almost the optimal rate of convergence for smooth functions in weighted Sobolev spaces, among a smaller number of candidates as compared to that of \cite{DP07}. This shall be done in Section~\ref{sec:exist}, which is the second contribution of this paper. Hence it would be interesting to study how to find such good point sets in a constructive manner, which we leave open for future work to address.

Finally in Section~\ref{sec:numer}, we conduct some numerical experiments up to $s=100$ based on Sobol' point sets, which are a special construction of digital nets over $\ZZ_2$. For smooth test integrands, we compare the performances of Sobol' point sets with and without dyadic antithetics. Surprisingly, it turns out that the rate of error convergence is improved by the use of antithetics. At this moment, however, there is no theoretical foundation to comprehend this nice convergence behavior. Hence, our numerical results motivate further work on a combination of QMC integration with antithetic sampling, and more broadly, with variance reduction techniques.
\section{Preliminaries}\label{sec:pre}
We shall use the following notation throughout this paper. Let $\NN$ be the set of positive integers and $\NN_0:=\NN \cup \{0\}$. Let $\CC$ be the set of all complex numbers. For an integer $b\ge 2$, let $\ZZ_b$ be the residue class ring modulo $b$, which we identify with the set $\{0,1,\ldots,b-1\}$ equipped with addition and subtraction modulo $b$, denoted by $\oplus$ and $\ominus$, respectively. For any point $x\in [0,1]$, we always use the $b$-adic expansion $x=\xi_1/b+\xi_2/b^2+\cdots$ with $\xi_i\in \ZZ_b$, which is unique in the sense that infinitely many of the $\xi_i$'s are different from $b-1$ if $x\in [0,1)$ and that all the $\xi_i$'s are equal to $b-1$ if $x=1$. Note that for $1\in \NN$ we use the $b$-adic expansion $1\cdot b^0$, whereas for $1\in [0,1]$ we use the $b$-adic expansion $(b-1)b^{-1}+(b-1)b^{-2}+\cdots$. It will be always clear from the context which expansion we use.

In this section, we recall necessary background and further notation, which shall be used in the subsequent analysis.
\subsection{Walsh functions}\label{subsec:walsh}
Walsh functions play an important role in analyzing the QMC integration error when using digital nets. We refer to \cite[Appendix~A]{DPbook} for general information on Walsh functions in the context of QMC integration. We first define Walsh functions for the one-dimensional case. In the following, let $\omega_b$ denote the primitive root of unity $\exp(2\pi \sqrt{-1}/b)$.

\begin{definition}
Let $k\in \NN_0$ with its $b$-adic expansion $k=\kappa_0+\kappa_1 b+\cdots$, which is actually a finite expansion. Then the $k$-th $b$-adic Walsh function $\wal_k:[0,1]\to \{1,\omega_b,\ldots,\omega_b^{b-1}\}$ is defined by
\begin{align*}
 \wal_k(x) := \omega_b^{\kappa_0\xi_1+\kappa_1\xi_2+\cdots} ,
\end{align*}
for $x\in [0,1]$ with its unique $b$-adic expansion $x=\xi_1/b+\xi_2/b^2+\cdots$.
\end{definition}
\noindent
Suppose that the $b$-adic expansion of $k$ is given by $k=\kappa_0+\kappa_1 b+\cdots+\kappa_{a-1}b^{a-1}$ with $\kappa_{a-1}\ne 0$. Then it is obvious from the above definition that the function $\wal_k$ does not depend on the digits $\xi_{a+1},\xi_{a+2},\ldots$, which appear in the $b$-adic expansion of $x$. This implies that every Walsh function $\wal_k$ is a piecewise constant function.

We can generalize the definition of Walsh functions for the high-dimensional case as follows.
\begin{definition}
Let $\bsk=(k_1,\ldots,k_s)\in \NN_0^s$. Then the $\bsk$-th $b$-adic Walsh function $\wal_{\bsk}:[0,1]^s\to \{1,\omega_b,\ldots,\omega_b^{b-1}\}$ is defined by
\begin{align*}
 \wal_{\bsk}(\bsx) := \prod_{j=1}^{s}\wal_{k_j}(x_j) ,
\end{align*}
for $\bsx=(x_1,\ldots,x_s)\in [0,1]^s$.
\end{definition}

It is known that, for fixed $b,s\in \NN$, $b\ge 2$, the $b$-adic Walsh function system $\{\wal_{\bsk}\colon \bsk\in \NN_0^s\}$ is a complete orthonormal basis in $L^2([0,1]^s)$, see for instance \cite[Theorem~A.11]{DPbook}. Therefore, every function $f\in L^2([0,1]^s)$ has its Walsh series expansion
\begin{align*}
 \sum_{\bsk\in \NN_0^s}\hat{f}(\bsk)\wal_{\bsk}, 
\end{align*}
where $\hat{f}(\bsk)$ denotes the $\bsk$-th Walsh coefficient which is defined by
\begin{align*}
 \hat{f}(\bsk) := \int_{[0,1]^s}f(\bsx)\overline{\wal_{\bsk}(\bsx)}\, \rd \bsx .
\end{align*}
Moreover, let $f:[0,1]^s\to \RR$ be a continuous function which satisfies the condition $\sum_{\bsk\in \NN_0^s}|\hat{f}(\bsk)|<\infty$. Then the Walsh series expansion of $f$ converges to $f$ itself pointwise absolutely, i.e., for any $\bsx\in [0,1]^s$, we have
\begin{align*}
 f(\bsx) = \sum_{\bsk\in \NN_0^s}\hat{f}(\bsk)\wal_{\bsk}(\bsx),
\end{align*}
see for instance \cite[Appendix~A.3]{DPbook} and \cite[Lemma~18]{GSY15}.

\subsection{Infinite direct products of $\ZZ_b$}\label{subsec:inf_prod}
In order to permit digital nets over $\ZZ_b$ which are defined by using infinite-column generating matrices, as mentioned in Section~\ref{sec:intro}, we have to deal with the infinite direct product of $\ZZ_b$, which is denoted by $G:=\prod_{i\ge 1}\ZZ_b$. Here we essentially follow the exposition of \cite[Subsection~2.1]{GSYxx}.

$G$ is a compact totally disconnected abelian group with the product topology, where $\ZZ_b$ is considered to be a discrete group. With a slight abuse of notation we denote by $\oplus$ and $\ominus$ addition and subtraction in $G$, respectively. Let $\tilde{\mu}$ be the product measure on $G$ induced by the equi-probability measure on $\ZZ_b$. A character on $G$ is a continuous group homomorphism from $G$ to $\{z\in \CC\colon |z|=1\}$. For $k\in \NN_0$, the $k$-th character is defined as follows.

\begin{definition}
Let $k\in \NN_0$ with its $b$-adic expansion $k=\kappa_0+\kappa_1 b+\cdots$, which is actually a finite expansion. Then the $k$-th character $\chi_k:G\to \{1,\omega_b,\ldots,\omega_b^{b-1}\}$ is defined by
\begin{align*}
 \chi_k(z) := \omega_b^{\kappa_0\zeta_1+\kappa_1\zeta_2+\cdots} ,
\end{align*}
for $z=(\zeta_1,\zeta_2,\ldots)\in G$.
\end{definition}
\noindent Note that every character on $G$ is equal to some $\chi_k$, see \cite{Pbook}.

The group $G$ can be related to the unit interval $[0,1]$ as follows: Let $z=(\zeta_1,\zeta_2,\ldots)\in G$ and $x\in [0,1]$ with its unique $b$-adic expansion $x=\xi_1/b+\xi_2/b^2+\cdots$. Then the \emph{projection map} $\pi:G\to [0,1]$ is defined by
\begin{align*}
 \pi(z) := \zeta_1/b+\zeta_2/b^2+\cdots ,
\end{align*}
whereas the \emph{section map} $\sigma:[0,1]\to G$ is defined by
\begin{align*}
 \sigma(x) := (\xi_1,\xi_2,\ldots) .
\end{align*}
By definition, $\pi$ is surjective and $\sigma$ is injective. In addition, we note that $\pi$ is continuous and that $\pi \circ \sigma = \mathrm{id}_{[0,1]}$. 

Now let us consider the $s$-ary Cartesian product of $G$, denoted by $G^s$. Again $G^s$ is a compact totally disconnected abelian group with the product topology. The operators $\oplus$ and $\ominus$ are applied componentwise. Moreover, let $\tilde{\bsmu}$ be the product measure on $G^s$ induced by $\tilde{\mu}$. For $\bsk\in \NN_0^s$, the $\bsk$-th character is defined as follows.
\begin{definition}
Let $\bsk=(k_1,\ldots,k_s)\in \NN_0^s$. Then the $\bsk$-th character $\chi_{\bsk}:G^s\to \{1,\omega_b,\ldots,\omega_b^{b-1}\}$ is defined by
\begin{align*}
 \chi_{\bsk}(\bsz) := \prod_{j=1}^{s}\chi_{k_j}(z_j) ,
\end{align*}
for $\bsz=(z_1,\ldots,z_s)\in G^s$.
\end{definition}
\noindent Again note that every character on $G^s$ is equal to some $\chi_{\bsk}$. The group $G^s$ can be related to the unit cube $[0,1]^s$ by applying both $\pi$ and $\sigma$ componentwise. Some important facts are summarized below. We refer to \cite{Pbook} and \cite{SWSbook} for the proofs of the first two items and the remaining three items, respectively. Although the reference \cite{SWSbook} only deals with the dyadic ($b=2$) case, the proofs for an arbitrary integer $b\ge 2$ remain essentially the same.

\begin{proposition}\label{prop:inf_prod} The following holds true:
\begin{enumerate}
\item For $k \in \NN_0$, we have
  \begin{align*}
    \int_{G} \chi_{k}(z)\, \rd \tilde{\mu}(z) = \begin{cases}
     1 & \text{if $k=0$},  \\
     0 & \text{otherwise}.
    \end{cases}
  \end{align*}
\item For $\bsk,\bsl\in \NN_0^s$, we have
  \begin{align*}
    \int_{G^s} \chi_{\bsk}(\bsz)\overline{\chi_{\bsl}(\bsz)}\, \rd \tilde{\bsmu}(\bsz) = \begin{cases}
     1 & \text{if $\bsk=\bsl$},  \\
     0 & \text{otherwise}.
    \end{cases}
  \end{align*}
\item For any $f \in L^1([0,1]^s)$, we have
  \begin{align*}
    \int_{[0,1]^s} f(\bsx) \, \rd \bsx = \int_{G^s} f(\pi(\bsz))\, \rd \tilde{\bsmu}(\bsz).
  \end{align*}
\item For any $g \in L^1(G^s)$, we have
  \begin{align*}
    \int_{G^s} g(\bsz)\, \rd \tilde{\bsmu}(\bsz) = \int_{[0,1]^s} g (\sigma(\bsx))\, \rd \bsx.
  \end{align*}
\item Let $H_n:=\{z=(\zeta_1, \zeta_2, \dots) \in G: \zeta_1=\zeta_2=\cdots =\zeta_n=0 \}$. Then we have
  \begin{align*}
    \sum_{\substack{\bsk \in \NN_0^s\\ k_j<b^n, \forall j}} \chi_{\bsk}(\bsz) = \begin{cases}
     b^{sn} & \text{if $\bsz \in H_n^s$}, \\
     0 & \text{otherwise}.
\end{cases}
  \end{align*}
\end{enumerate}
\end{proposition}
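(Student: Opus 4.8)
The plan is to treat the five items in three groups: items 1--2 concern orthogonality of characters and reduce to elementary geometric sums; items 3--4 are measure-transport identities between $G^s$ and $[0,1]^s$; and item 5 is a finite delta-kernel identity resting on the same geometric-sum computation as item 1. For item 1, write $k=\kappa_0+\kappa_1 b+\cdots+\kappa_{a-1}b^{a-1}$ with $\kappa_{a-1}\neq 0$ and observe that $\chi_k$ depends only on the first $a$ coordinates of $z$; since $\tilde{\mu}$ is a product of equi-probability measures, the integral factorizes as $\prod_{i=1}^{a}\frac{1}{b}\sum_{\zeta=0}^{b-1}\omega_b^{\kappa_{i-1}\zeta}$. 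For $k=0$ each factor equals $1$, whereas for $k\neq 0$ the $a$-th factor involves $\kappa_{a-1}\not\equiv 0\pmod b$, so the inner geometric sum vanishes and the whole product is $0$. Item 2 reduces to item 1 by Fubini: because $\omega_b^{b}=1$ and each coordinate of $\bsz$ lies in $\{0,\dots,b-1\}$, each factor $\chi_{k_j}(z_j)\overline{\chi_{l_j}(z_j)}$ equals $\chi_{k_j'}(z_j)$ for the index $k_j'$ whose digits are $(\kappa_{j,i}-\lambda_{j,i})\bmod b$, and $k_j'=0$ exactly when $k_j=l_j$. Integrating over $G^s$ factorizes into one-dimensional integrals, each handled by item 1, which gives $1$ if $\bsk=\bsl$ and $0$ otherwise.

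For items 3 and 4 I would show that $\pi$ transports $\tilde{\mu}$ to the Lebesgue measure on $[0,1]$. The cylinder set $\{z:\zeta_1=c_1,\dots,\zeta_n=c_n\}$ has $\tilde{\mu}$-measure $b^{-n}$, and its $\pi$-image is the $b$-adic interval of length $b^{-n}$ starting at $\sum_{i=1}^{n}c_i b^{-i}$, up to the countable set of $b$-adic rationals. Since such cylinder sets generate the Borel $\sigma$-algebra, the pushforward of $\tilde{\mu}$ under $\pi$ equals Lebesgue measure, and the change-of-variables formula for pushforward measures yields item 3 (applied coordinatewise on $G^s$). Item 4 is the same statement read through $\sigma$, using $\pi\circ\sigma=\mathrm{id}_{[0,1]}$ together with the fact that $\sigma$ inverts $\pi$ off the null set on which the $b$-adic expansion fails to be unique.

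Item 5 is the cleanest. The admissible indices are precisely those $k_j$ whose $b$-adic digits beyond position $n-1$ vanish, so the sum factorizes across coordinates and digit positions:
\begin{align*}
\sum_{\substack{\bsk\in\NN_0^s\\ k_j<b^n,\ \forall j}}\chi_{\bsk}(\bsz)=\prod_{j=1}^{s}\prod_{i=1}^{n}\left(\sum_{\kappa=0}^{b-1}\omega_b^{\kappa\zeta_{j,i}}\right),
\end{align*}
where $\zeta_{j,i}$ denotes the $i$-th coordinate of $z_j$. Each inner factor equals $b$ when $\zeta_{j,i}=0$ and $0$ otherwise, so the product equals $b^{sn}$ exactly when the first $n$ coordinates of every $z_j$ vanish, i.e. $\bsz\in H_n^s$, and $0$ otherwise. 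I expect the only genuine obstacle to be the measure-transport step in items 3--4: one must verify that the non-uniqueness of $b$-adic expansions is confined to a set that is simultaneously Lebesgue-null in $[0,1]^s$ and $\tilde{\bsmu}$-null in $G^s$, so that $\pi$ and $\sigma$ become mutually inverse measure isomorphisms once this set is discarded; everything else is a direct geometric-sum computation.
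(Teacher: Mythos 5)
Your proof is correct, but it is worth noting that the paper does not actually prove Proposition~1 at all: it defers items~1--2 to Pontryagin \cite{Pbook} and items~3--5 to Schipp--Wade--Simon \cite{SWSbook} (with the remark that the dyadic arguments there carry over to general $b$). What you supply is therefore a self-contained elementary replacement for those citations, and all the key steps check out: the factorization of $\int_G \chi_k\,\rd\tilde{\mu}$ over the finitely many coordinates on which $\chi_k$ depends, with the geometric sum $\sum_{\zeta=0}^{b-1}\omega_b^{\kappa\zeta}$ vanishing because the leading digit $\kappa_{a-1}$ is a nonzero residue mod $b$; the reduction of item~2 to item~1 via the digitwise subtraction $\bmod\ b$ (this is exactly the character identity $\chi_k\overline{\chi_l}=\chi_{k\ominus l}$ in disguise); the identification of $\pi_*\tilde{\mu}$ with Lebesgue measure by matching cylinder sets with $b$-adic intervals, which generate the Borel $\sigma$-algebra and form a $\pi$-system so that the two measures agree everywhere; and the delta-kernel computation in item~5. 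Your closing caveat is also the right one and you resolve it correctly: the set where $\sigma\circ\pi\neq\mathrm{id}_G$ consists of the sequences that are eventually constant equal to $b-1$, which is countable and hence $\tilde{\mu}$-null, so $g\circ\sigma\circ\pi=g$ almost everywhere and item~4 follows from item~3 applied to $g\circ\sigma$. The trade-off is the usual one: the paper's citation route is shorter and leans on established harmonic analysis on compact abelian groups, while your argument costs a page but keeps the paper self-contained and makes transparent exactly where the non-uniqueness of $b$-adic expansions enters (only through a null set), which is the point the paper itself has to be careful about when working with infinite-column generating matrices.
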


\subsection{Digital nets over $\ZZ_b$}\label{subsec:digital_net}
We now introduce the definition of digital nets over $\ZZ_b$ by using infinite-column generating matrices.

\begin{definition}\label{def:digital_net}
For $m,s\in \NN$, let $C_1,\ldots,C_s\in \ZZ_b^{\NN\times m}$. Let $h$ be an integer with $0\le h<b^m$ whose $b$-adic expansion is denoted by $h=\eta_0+\eta_1b+\ldots+\eta_{m-1}b^{m-1}$. Let $\bsz_h=(z_{h,1},\ldots,z_{h,s})\in G^s$ be given by
\begin{align*}
z_{h,j}^{\top} = C_j\cdot (\eta_0,\eta_1,\ldots,\eta_{m-1})^{\top}\quad \text{for $1\le j\le s$}.
\end{align*}
Then the set $\Pcal=\{\bsz_0,\bsz_1,\ldots,\bsz_{b^m-1}\}\subset G^s$ is called a digital net over $\ZZ_b$ in $G^s$ with generating matrices $C_1,\ldots,C_s$.

Furthermore, the set $P:=\{\pi(\bsz)\colon \bsz\in \Pcal\}\subset [0,1]^s$ is called a digital net over $\ZZ_b$ in $[0,1]^s$ with generating matrices $C_1,\ldots,C_s$.
\end{definition}

In the remainder of this paper, digital nets in $G^s$ are denoted by the calligraphic letter $\Pcal$, whereas digital nets in $[0,1]^s$ are denoted by the block letter $P$, as in the above definition. Since $P$ is nothing but the image of $\Pcal$ under $\pi:G^s\to [0,1]^s$, we shall mostly deal with $\Pcal$ instead of $P$ and often write $\pi(\Pcal)$ instead of $P$ to represent digital nets in $[0,1]^s$. Note that every digital net in $G^s$ is a $\ZZ_b$-module of $G^s$ as well as a subgroup of $G^s$.

For a digital net $\Pcal$ in $G^s$, its dual net is defined as follows.
\begin{definition}\label{def:dual_net}
For $m,s\in \NN$, let $\Pcal$ be a digital net in $G^s$ with generating matrices $C_1,\ldots,C_s\in \ZZ_b^{\NN\times m}$. Then the dual net of $\Pcal$, denoted by $\Pcal^{\perp}$, is defined by
\begin{align*}
\Pcal^{\perp} := \left\{ \bsk=(k_1,\ldots,k_s)\in \NN_0^s\colon \vec{k}_1 C_1\oplus \cdots \oplus \vec{k}_sC_s=(0,\ldots,0) \in \ZZ_b^m \right\} ,
\end{align*}
where we write $\vec{k}=(\kappa_0,\kappa_1,\ldots)$ for $k\in \NN_0$ with its finite $b$-adic expansion $k=\kappa_0+\kappa_1b+\cdots$.
\end{definition}

We recall that the set of $\chi_{\bsk}$'s are the characters on $G^s$. From the group structure of $\Pcal$ and Definition~\ref{def:dual_net}, we have the following lemma.
\begin{lemma}\label{lem:dual_char}
Let $\Pcal$ be a digital net in $G^s$ and $\Pcal^{\perp}$ its dual net. Then we have
\begin{align*}
\sum_{\bsz\in \Pcal}\chi_{\bsk}(\bsz) = \begin{cases}
|\Pcal| & \text{if $\bsk\in \Pcal^{\perp}$,} \\
0 & \text{otherwise.}
\end{cases}
\end{align*}
\end{lemma}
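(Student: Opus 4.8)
The plan is to exploit the finite abelian group structure of $\Pcal$ together with an explicit evaluation of each character $\chi_{\bsk}$ on the points $\bsz_h$. Conceptually the identity is an instance of the standard orthogonality relation for characters of a finite abelian group: since $\Pcal$ is a finite subgroup of $G^s$ and $\chi_{\bsk}$ restricts to a group homomorphism from $\Pcal$ into $\{z\in \CC\colon |z|=1\}$, the sum $\sum_{\bsz\in \Pcal}\chi_{\bsk}(\bsz)$ must equal $|\Pcal|$ when $\chi_{\bsk}$ is trivial on $\Pcal$ and must vanish otherwise. The substantive work, and the point at which the dual net enters, is to translate the condition that $\chi_{\bsk}$ is trivial on $\Pcal$ into the algebraic condition $\bsk\in \Pcal^{\perp}$. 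Rather than invoking orthogonality abstractly, I would carry out the character evaluation by hand, which simultaneously yields the value $|\Pcal|$ and the characterization via $\Pcal^{\perp}$, and uses only the definitions already at hand.

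First I would unwind the definitions. Writing $h=\eta_0+\eta_1 b+\cdots+\eta_{m-1}b^{m-1}$ and $\bsz_h=(z_{h,1},\ldots,z_{h,s})$ with $z_{h,j}^{\top}=C_j(\eta_0,\ldots,\eta_{m-1})^{\top}$, the $i$-th $b$-adic digit $\zeta_{h,j,i}$ of $z_{h,j}$ is $\sum_{l=0}^{m-1}(C_j)_{i,l}\eta_l$. Using the definitions of $\chi_{k_j}$ and $\chi_{\bsk}$, I obtain $\chi_{\bsk}(\bsz_h)=\omega_b^{E}$ with exponent $E\equiv \sum_{j=1}^{s}\sum_{i\ge 1}\kappa_{j,i-1}\zeta_{h,j,i}\pmod b$. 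Substituting the expression for $\zeta_{h,j,i}$ and interchanging the (finite, since each $\vec{k}_j$ has finite support) sums to collect the coefficient of each $\eta_l$ identifies that coefficient as the $l$-th component of $\vec{k}_1 C_1\oplus \cdots \oplus \vec{k}_s C_s=:\bsw\in \ZZ_b^m$. Hence $\chi_{\bsk}(\bsz_h)=\omega_b^{\eta_0 w_0+\cdots+\eta_{m-1}w_{m-1}}$, and in particular $\chi_{\bsk}$ is trivial on $\Pcal$ exactly when $\bsw=(0,\ldots,0)$, i.e. when $\bsk\in \Pcal^{\perp}$ in the sense of Definition~\ref{def:dual_net}.

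With this explicit form I would sum directly: as $h$ ranges over $0,\ldots,b^m-1$ the digit vector $(\eta_0,\ldots,\eta_{m-1})$ ranges over all of $\ZZ_b^m$, so
\[
\sum_{\bsz\in \Pcal}\chi_{\bsk}(\bsz) = \sum_{(\eta_0,\ldots,\eta_{m-1})\in \ZZ_b^m}\omega_b^{\eta_0 w_0+\cdots+\eta_{m-1}w_{m-1}} = \prod_{l=0}^{m-1}\left(\sum_{\eta=0}^{b-1}\omega_b^{\eta w_l}\right).
\]
Each inner factor is a geometric sum of $b$-th roots of unity, equal to $b$ if $w_l=0$ and to $0$ otherwise. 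Therefore the product equals $b^m=|\Pcal|$ precisely when every $w_l$ vanishes, that is when $\bsk\in \Pcal^{\perp}$, and equals $0$ in all other cases, which is exactly the claim.

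I expect the main obstacle to be purely bookkeeping rather than conceptual: carefully aligning the digit indexing of $\vec{k}_j=(\kappa_{j,0},\kappa_{j,1},\ldots)$ with the row index of the generating matrix $C_j$ and with the digits of $z_{h,j}$, and justifying the interchange of summation order. The latter is harmless because each $\vec{k}_j$ has only finitely many nonzero entries, so despite $C_j$ having infinitely many rows, every product $\vec{k}_j C_j$ is a finite sum and $\bsw$ is a well-defined element of $\ZZ_b^m$; no convergence issue arises, and all arithmetic in the exponent is carried out modulo $b$.
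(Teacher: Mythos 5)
Your proof is correct. The paper actually gives no proof of Lemma~\ref{lem:dual_char} at all: it simply asserts that the identity follows ``from the group structure of $\Pcal$ and Definition~\ref{def:dual_net}'', i.e.\ it implicitly invokes the standard orthogonality relation for characters of the finite abelian group $\Pcal$. Your argument fills in exactly the two pieces that this one-line justification suppresses: (i) the digit-level computation showing $\chi_{\bsk}(\bsz_h)=\omega_b^{\eta_0w_0+\cdots+\eta_{m-1}w_{m-1}}$ with $\bsw=\vec{k}_1C_1\oplus\cdots\oplus\vec{k}_sC_s$, which is precisely the translation between ``$\chi_{\bsk}$ is trivial on $\Pcal$'' and ``$\bsk\in\Pcal^{\perp}$'' and is where Definition~\ref{def:dual_net} genuinely enters; and (ii) the factorization of the sum over $(\eta_0,\ldots,\eta_{m-1})\in\ZZ_b^m$ into geometric sums of $b$-th roots of unity, which replaces the abstract orthogonality argument (the usual ``multiply the sum by a nontrivial character value'' trick) by a direct evaluation. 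Your version has the additional merit of working transparently with the multiset convention ($|\Pcal|=b^m$ even when points repeat, since the sum is over $h=0,\ldots,b^m-1$) and of noting that the infinite number of rows of each $C_j$ causes no convergence issue because $\vec{k}_j$ has finite support. The two routes buy essentially the same thing; yours is more elementary and self-contained, the paper's is shorter because it delegates to a standard fact.
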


\section{Digital nets with antithetics}\label{sec:banti}
In this section, we generalize the notion of antithetic sampling from base $2$ to an arbitrary base $b\ge 2$, and then analyze the QMC integration error of digital nets over $\ZZ_b$ with $b$-adic antithetics.

\subsection{Generalization of antithetic sampling}
In order to give a hint as to how we generalize the notion of antithetic sampling, we first give another look at the original antithetic sampling.

Here let us consider the dyadic ($b=2$) case. Let $e:=(1,1,\ldots)\in G$. Then it obviously holds that $\pi(e)=1$. For any $z=(\zeta_1,\zeta_2,\ldots)\in G$, we have
\begin{align*}
1-\pi(z) & = \pi(e)-\pi(z) \\
& = \left(\frac{1}{2}+\frac{1}{2^2}+\cdots \right) - \left(\frac{\zeta_1}{2}+\frac{\zeta_2}{2^2}+\cdots \right) \\
& = \frac{1-\zeta_1}{2}+\frac{1-\zeta_2}{2^2}+\cdots \\
& = \pi(\zeta_1\oplus 1, \zeta_2\oplus 1, \ldots) = \pi(z\oplus e).
\end{align*}
This means that $\pi(z\oplus e)$ is the antithetic of $\pi(z)$. In this interpretation, the antithetic of $1/2$ should be understood as $1/2^2+1/2^3+\cdots$ not as $1/2$, although the expansion $1/2^2+1/2^3+\cdots$ is not allowed due to the uniqueness of dyadic expansion for $x\in [0,1]$. The same problem arises whenever $x$ is a dyadic rational, i.e., $x$ is given in the form $a/2^c$ with $a,c\in \NN_0$ and $0\le a\le 2^c$. This is why we consider the infinite direct product of $\ZZ_2$, which permits different dyadic expansions for $x\in [0,1]$ through the projection map $\pi$. For instance, we have $\pi(1,0,0,\ldots)=\pi(0,1,1,\ldots)=1/2$.

For the $s$-dimensional case, let $\bse:=(e,\ldots,e)\in G^s$. Then for any $\bsz=(z_1,\ldots,z_s)\in G^s$ we have
\begin{align*}
\bsone -\pi(\bsz) = \pi(\bse)-\pi(\bsz) = (\pi(z_1\oplus e),\ldots,\pi(z_s\oplus e)) = \pi(\bsz\oplus \bse).
\end{align*}
From the above identity, the original (dyadic) antithetic sampling can be seen as follows: Let $\Pcal$ be a finite set in $G^s$ and $P=\{\pi(\bsz)\colon \bsz\in \Pcal\}\in [0,1]^s$. Then $P_{\ant}$ is given by
\begin{align*}
P_{\ant} = P\cup \{\pi(\bsz\oplus \bse)\colon \bsz\in \Pcal \}.
\end{align*}

Now we are ready to introduce the notion of $b$-adic antithetic sampling. In the following, let $b$ be an arbitrary integer base $b\ge 2$. For $l\in \ZZ_b$, we write $\bse_l=(e_l,\ldots,e_l)\in G^s$ where $e_l$ is defined by $e_l:=(l,l,\ldots)\in G$.
\begin{definition}\label{def:badic_antithetic}
Let $\Pcal$ be a finite set in $G^s$. The $b$-adic antithetic sampling of $\Pcal$ is defined by
\begin{align*}
\Pcal_{\bant} := \bigcup_{l\in \ZZ_b}\{\bsz\oplus \bse_l\colon \bsz\in \Pcal \} .
\end{align*}
Furthermore, let $P=\{\pi(\bsz)\colon \bsz\in \Pcal\}$ be a finite point set in $[0,1]^s$. The $b$-adic antithetic sampling of $P$ is defined by $P_{\bant} := \{\pi(\bsz)\colon \bsz\in \Pcal_{\bant}\}$.
\end{definition}
\noindent
By definition, we have $|\Pcal_{\bant}|=b|\Pcal|$ and $|P_{\bant}|=b|P|$.

\begin{remark}
For $\bsl=(l_1,\ldots,l_s)\in \ZZ_b^s$, let $\bse_{\bsl}=(e_{l_1},\ldots,e_{l_s})\in G^s$. For a finite set $\Pcal\subset G^s$, the $b$-adic symmetrization of $\Pcal$ introduced in \cite{Godaxx} is defined by
\begin{align*}
 \Pcal_{\bsym} := \bigcup_{\bsl\in \ZZ_b^s}\{\bsz\oplus \bse_{\bsl}\colon \bsz\in \Pcal \} .
\end{align*}
Obviously we have $|\Pcal_{\bsym}|=b^s|\Pcal|$, so that the number of points grows exponentially with the dimension $s$. The $b$-adic antithetic sampling avoids such an exponential growth by considering only the case $l_1=\cdots=l_s$.
\end{remark}

\subsection{Digital nets with antithetics}
In this subsection and in the remainder of this paper, we focus on the case where the set $\Pcal$ (the point set $P$) is a digital net over $\ZZ_b$ in $G^s$ (in $[0,1]^s$, respectively).
\begin{lemma}\label{lem:bant-digital-net}
Let $\Pcal$ be a digital net over $\ZZ_b$ in $G^s$ with generating matrices $C_1,\ldots,C_s\in \ZZ_b^{\NN\times m}$. Then $\Pcal_{\bant}$ is a digital net over $\ZZ_b$ in $G^s$ with generating matrices $D_1,\ldots,D_s\in \ZZ_b^{\NN\times (m+1)}$, where each $D_j$ is given by $$D_j=(C_j\vert (1,1,\ldots)^{\top}).$$
\end{lemma}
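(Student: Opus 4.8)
The plan is to verify that the point set produced by the digital construction of Definition~\ref{def:digital_net} applied to $D_1,\ldots,D_s\in\ZZ_b^{\NN\times(m+1)}$ coincides, as a multiset, with $\Pcal_{\bant}$. Since both have exactly $b^{m+1}$ points (counted with multiplicity), it suffices to exhibit an index correspondence identifying them term by term.

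First I would fix $h'$ with $0\le h'<b^{m+1}$ and write its $b$-adic expansion $h'=\eta_0+\eta_1 b+\cdots+\eta_m b^m$. The crucial observation is that this expansion splits the index into a lower part $h:=\eta_0+\eta_1 b+\cdots+\eta_{m-1}b^{m-1}$, satisfying $0\le h<b^m$, and a top digit $\eta_m\in\ZZ_b$; moreover the correspondence $h'\leftrightarrow(h,\eta_m)$ is a bijection between $\{0,\ldots,b^{m+1}-1\}$ and $\{0,\ldots,b^m-1\}\times\ZZ_b$.

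Next I would exploit the block structure $D_j=(C_j\,\vert\,(1,1,\ldots)^{\top})$. Writing $\bsz'_{h'}=(z'_{h',1},\ldots,z'_{h',s})$ for the $h'$-th point of the digital net generated by $D_1,\ldots,D_s$, the matrix--vector product over $\ZZ_b$ decomposes as
\begin{align*}
(z'_{h',j})^{\top}
&= D_j\cdot(\eta_0,\ldots,\eta_m)^{\top}\\
&= C_j\cdot(\eta_0,\ldots,\eta_{m-1})^{\top}\oplus \eta_m\,(1,1,\ldots)^{\top}\\
&= (z_{h,j})^{\top}\oplus e_{\eta_m}^{\top},
\end{align*}
where the last equality uses $\eta_m(1,1,\ldots)^{\top}=(\eta_m,\eta_m,\ldots)^{\top}=e_{\eta_m}^{\top}$ by the definition of $e_{\eta_m}$. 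Reading this componentwise over $j=1,\ldots,s$ gives $\bsz'_{h'}=\bsz_h\oplus\bse_{\eta_m}$.

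Finally, I would let $(h,\eta_m)$ run over $\{0,\ldots,b^m-1\}\times\ZZ_b$ through the bijection above; the points $\bsz'_{h'}=\bsz_h\oplus\bse_{\eta_m}$ then sweep out precisely $\bigcup_{l\in\ZZ_b}\{\bsz\oplus\bse_l\colon\bsz\in\Pcal\}=\Pcal_{\bant}$, completing the identification. I do not expect a genuine obstacle, as the argument is a direct unwinding of the definitions; the only point requiring care is bookkeeping — ensuring that the extra $b$-adic digit $\eta_m$ of the enlarged index ranges over all of $\ZZ_b$ independently of the lower digits, and that the identity is understood at the level of multisets, so that the count $|\Pcal_{\bant}|=b\,|\Pcal|=b^{m+1}$ is respected even when the generating matrices are rank-deficient.
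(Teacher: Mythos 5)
Your proposal is correct and is essentially the paper's own proof: the same index splitting $h'\leftrightarrow(h,\eta_m)$, the same block decomposition of $D_j\cdot(\eta_0,\ldots,\eta_m)^{\top}$ into $C_j\cdot(\eta_0,\ldots,\eta_{m-1})^{\top}\oplus e_{\eta_m}^{\top}$, and the same sweep over $\{0,\ldots,b^m-1\}\times\ZZ_b$ to identify the two point sets. The closing remark about multiplicities is consistent with the paper's convention that points are counted with multiplicity.
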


\begin{proof}
Let $\Qcal$ denote the digital net over $\ZZ_b$ in $G^s$ with generating matrices $D_1,\ldots,D_s\in \ZZ_b^{\NN\times (m+1)}$. Then it suffices to prove $\Qcal=\Pcal_{\bant}$.

Let $\Pcal=\{\bsz_0,\ldots,\bsz_{b^m-1}\}$ and $\Qcal=\{\bsw_0,\ldots,\bsw_{b^{m+1}-1}\}$, where each element is given as in Definition~\ref{def:digital_net}. Now let $h$ be an integer with $0\le h<b^{m+1}$. We write $h=h'+lb^m$ with $h',l\in \NN_0$, $0\le h'<b^m$ and $0\le l<b$. Moreover, we denote the $b$-adic expansion of $h'$ by $h'=\eta_0+\eta_1b+\ldots+\eta_{m-1}b^{m-1}$. Then the $h$-th element $\bsw_h=(w_{h,1},\ldots,w_{h,s})$ of $\Qcal$ is given by
\begin{align*}
w_{h,j}^{\top} & = D_j\cdot (\eta_0,\eta_1,\ldots,\eta_{m-1},l)^{\top} \\
& = C_j\cdot (\eta_0,\eta_1,\ldots,\eta_{m-1})^{\top} \oplus (l,l,\ldots)^{\top} \\
& = z_{h',j}^{\top}\oplus e_l^{\top} ,
\end{align*}
from which it holds that $\bsw_h=\bsz_{h'}\oplus \bse_l$. Thus we have
\begin{align*}
\Qcal & = \{\bsw_{h'+lb^m} \colon 0\le h'<b^m, 0\le l<b\} \\
& = \{\bsz_{h'}\oplus \bse_l \colon 0\le h'<b^m, 0\le l<b\} \\
& = \bigcup_{l\in \ZZ_b}\{\bsz\oplus \bse_l\colon \bsz\in \Pcal \} = \Pcal_{\bant},
\end{align*}
which completes the proof.
\end{proof}
\noindent
From this lemma, it is obvious that $P_{\bant}$ is a digital net over $\ZZ_b$ in $[0,1]^s$ with generating matrices $D_1,\ldots,D_s\in \ZZ_b^{\NN\times (m+1)}$.

In the remainder of this paper, we need the \emph{sum-of-digit modulo $b$} function $\delta: \NN_0\to \{0,1,\ldots,b-1\}$, which is defined as follows. For $k\in \NN_0$, we denote its $b$-adic expansion by $k=\kappa_{0}+\kappa_{1}b+\cdots$, which is actually a finite expansion. Then we define
\begin{align*}
\delta(k) := \sum_{i\ge 0}\kappa_i \pmod b.
\end{align*}
For $\bsk=(k_1,\ldots,k_s)\in \NN_0^s$, we define
\begin{align*}
\delta(\bsk) := \sum_{j=1}^{s}\delta(k_j) \pmod b.
\end{align*}

The dual net of $\Pcal_{\bant}$ can be related to the dual net of $\Pcal$ as follows.
\begin{lemma}\label{lem:bani-dual-net}
Let $\Pcal$ be a digital net over $\ZZ_b$ in $G^s$ and $\Pcal^{\perp}$ its dual net. Then the dual net of $\Pcal_{\bant}$ is given by
\begin{align*}
\Pcal_{\bant}^{\perp} = \Pcal^{\perp}\cap \left\{\bsk\in \NN_0^s: \delta(\bsk)= 0\right\}.
\end{align*}
\end{lemma}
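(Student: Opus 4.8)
The plan is to characterize $\Pcal_{\bant}^{\perp}$ directly from Lemma~\ref{lem:bant-digital-net}, which tells us that $\Pcal_{\bant}$ is a digital net over $\ZZ_b$ with generating matrices $D_j=(C_j\vert(1,1,\ldots)^{\top})$. By Definition~\ref{def:dual_net}, a vector $\bsk=(k_1,\ldots,k_s)\in\NN_0^s$ lies in $\Pcal_{\bant}^{\perp}$ if and only if $\vec{k}_1 D_1\oplus\cdots\oplus\vec{k}_s D_s=(0,\ldots,0)\in\ZZ_b^{m+1}$. The first thing I would do is observe that each $D_j$ is obtained from $C_j$ by appending exactly one extra column, namely the all-ones column $(1,1,\ldots)^{\top}$. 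Consequently the product $\vec{k}_j D_j$ is the concatenation of $\vec{k}_j C_j$ (its first $m$ entries) with a single extra entry coming from the appended column.

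Next I would compute that extra $(m+1)$-st entry explicitly. Since the appended column is $(1,1,\ldots)^{\top}$, the inner product of $\vec{k}_j=(\kappa_0,\kappa_1,\ldots)$ with this column is simply $\sum_{i\ge 0}\kappa_i$ reduced modulo $b$, which is precisely $\delta(k_j)$ by the definition of the sum-of-digits function. Summing over $j=1,\ldots,s$, the $(m+1)$-st coordinate of $\vec{k}_1 D_1\oplus\cdots\oplus\vec{k}_s D_s$ equals $\sum_{j=1}^{s}\delta(k_j)\bmod b=\delta(\bsk)$. This is the only place where a genuine (if routine) verification is needed, so I would state it carefully rather than assert it.

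With this computation in hand, the condition $\vec{k}_1 D_1\oplus\cdots\oplus\vec{k}_s D_s=\bszero\in\ZZ_b^{m+1}$ splits into two independent conditions, because the coordinates are checked componentwise: the first $m$ coordinates give $\vec{k}_1 C_1\oplus\cdots\oplus\vec{k}_s C_s=\bszero\in\ZZ_b^m$, which by Definition~\ref{def:dual_net} is exactly the statement $\bsk\in\Pcal^{\perp}$; and the last coordinate gives $\delta(\bsk)=0$. Therefore $\bsk\in\Pcal_{\bant}^{\perp}$ if and only if $\bsk\in\Pcal^{\perp}$ and $\delta(\bsk)=0$, which is the claimed intersection $\Pcal^{\perp}\cap\{\bsk\in\NN_0^s:\delta(\bsk)=0\}$.

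I do not anticipate a serious obstacle here: the result is essentially a bookkeeping identity that follows once one recognizes that appending the all-ones column to every generating matrix adds the single linear constraint $\delta(\bsk)=0$ on top of the existing dual-net constraints. The only point requiring mild care is ensuring that the $b$-adic digit vectors $\vec{k}_j$ are treated consistently (finite expansions, indexed from $0$) so that the appended-column contribution is correctly identified as $\delta(k_j)$, and that the componentwise nature of equality in $\ZZ_b^{m+1}$ is invoked to decouple the two conditions. Alternatively, one could derive the same conclusion from Lemma~\ref{lem:dual_char} together with Definition~\ref{def:badic_antithetic} by evaluating $\sum_{\bsz\in\Pcal_{\bant}}\chi_{\bsk}(\bsz)$ and using that $\chi_{\bsk}(\bse_l)=\omega_b^{l\,\delta(\bsk)}$, but the direct generating-matrix argument above is cleaner and more transparent.
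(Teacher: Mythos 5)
Your proposal is correct and takes essentially the same route as the paper's proof: both use Lemma~\ref{lem:bant-digital-net} to write the generating matrices of $\Pcal_{\bant}$ as $D_j=(C_j\vert(1,1,\ldots)^{\top})$, compute $\vec{k}_jD_j=(\vec{k}_jC_j\vert\delta(k_j))$ so that the sum becomes $(\vec{k}_1C_1\oplus\cdots\oplus\vec{k}_sC_s\vert\delta(\bsk))$, and then decouple the vanishing of the first $m$ coordinates (the condition $\bsk\in\Pcal^{\perp}$) from the vanishing of the last coordinate (the condition $\delta(\bsk)=0$).
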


\begin{proof}
From Definition~\ref{def:dual_net} and Lemma~\ref{lem:bant-digital-net}, the dual net of $\Pcal_{\bant}$ is given by
\begin{align*}
\Pcal_{\bant}^{\perp} := \left\{ \bsk=(k_1,\ldots,k_s)\in \NN_0^s\colon \vec{k}_1 D_1\oplus \cdots \oplus \vec{k}_sD_s=(0,\ldots,0) \in \ZZ_b^{m+1} \right\} .
\end{align*}
In the above, we have
\begin{align*}
\vec{k}_1 D_1\oplus \cdots \oplus \vec{k}_sD_s & =  \vec{k}_1 (C_1\vert (1,1,\ldots)^{\top})\oplus \cdots \oplus \vec{k}_s (C_s\vert (1,1,\ldots)^{\top}) \\
& = (\vec{k}_1 C_1\vert \delta(k_1))\oplus \cdots \oplus (\vec{k}_s C_s\vert \delta(k_s)) \\
& = (\vec{k}_1 C_1\oplus \cdots \oplus \vec{k}_s C_s \vert \delta(\bsk)).
\end{align*}
Thus the condition $\bsk\in \Pcal_{\bant}^{\perp}$ is satisfied if and only if 
\begin{align*}
\vec{k}_1 C_1\oplus \cdots \oplus \vec{k}_s C_s = (0,\ldots,0) \in \ZZ_b^m\quad \text{and}\quad \delta(\bsk)= 0 ,
\end{align*}
which proves this lemma.
\end{proof}

\subsection{QMC integration error}
Here we investigate the QMC integration error of digital nets over $\ZZ_b$ with $b$-adic antithetics. First we study the integration error for a particular function, and then study the worst-case error in a reproducing kernel Hilbert space.

In order to study the integration error for a particular function $f$, we need the following lemma on the pointwise absolute convergence of the Walsh series. Although the proof is quite similar to that used in \cite[Proposition~19]{GSYxx}, we provide it below for the sake of completeness.
\begin{lemma}\label{lem:abs_conv_f}
Let $f:[0,1]^s\to \RR$ be a continuous function which satisfies the condition $\sum_{\bsk\in \NN_0^s}|\hat{f}(\bsk)|<\infty$. Then for any $\bsz\in G^s$ we have
\begin{align}\label{eq:abs_conv_f}
f(\pi(\bsz)) = \sum_{\bsk\in \NN_0^s}\hat{f}(\bsk)\chi_{\bsk}(\bsz).
\end{align}
\end{lemma}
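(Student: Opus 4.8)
The plan is to bridge the already-established Walsh series expansion on $[0,1]^s$ to the character series on $G^s$ via the projection map $\pi$. The starting point is the displayed fact recalled at the end of Subsection~\ref{subsec:walsh}: since $f$ is continuous with $\sum_{\bsk}|\hat{f}(\bsk)|<\infty$, the Walsh series converges to $f$ pointwise absolutely, so for every $\bsx\in[0,1]^s$ we have $f(\bsx)=\sum_{\bsk\in\NN_0^s}\hat{f}(\bsk)\wal_{\bsk}(\bsx)$. The entire task is then to relate $\wal_{\bsk}(\pi(\bsz))$ to $\chi_{\bsk}(\bsz)$ and to show the two absolutely convergent series agree after applying $\pi$.

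First I would prove the key pointwise identity
\begin{align}\label{eq:char_wal}
\chi_{\bsk}(\bsz) = \wal_{\bsk}(\pi(\bsz)) \qquad \text{for all } \bsk\in\NN_0^s,\ \bsz\in G^s.
\end{align}
It suffices to treat the one-dimensional case and take products. Fix $k\in\NN_0$ with finite $b$-adic expansion $k=\kappa_0+\kappa_1 b+\cdots$ and $z=(\zeta_1,\zeta_2,\ldots)\in G$. By Definition of the character, $\chi_k(z)=\omega_b^{\kappa_0\zeta_1+\kappa_1\zeta_2+\cdots}$. Writing $x=\pi(z)=\zeta_1/b+\zeta_2/b^2+\cdots$, the subtle point is that the digits $\zeta_i$ need not be the digits $\xi_i$ of the \emph{unique} $b$-adic expansion of $x$ used in the definition of $\wal_k$; they can differ exactly when $x$ is a $b$-adic rational, where two representations collide. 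Here I would exploit the fact, noted after the definition of the one-dimensional Walsh function, that $\wal_k$ depends only on $\xi_1,\ldots,\xi_a$ where $a$ is the length of the expansion of $k$ (i.e.\ $\kappa_{a-1}\neq 0$). Two $b$-adic representations of the same $x$ differ only from some index onward by the pattern $(\ldots,c,0,0,\ldots)$ versus $(\ldots,c-1,b-1,b-1,\ldots)$; summing the exponent $\sum_i \kappa_{i-1}\zeta_i$ over the tail where all $\kappa_{i-1}=0$ contributes nothing, so the value of the exponent modulo $b$ is unaffected and $\chi_k(z)=\wal_k(x)$ regardless of which representation the digits of $z$ realize. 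Taking the product over $j=1,\ldots,s$ gives \eqref{eq:char_wal}.

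Once \eqref{eq:char_wal} is in hand, the conclusion is immediate: for any $\bsz\in G^s$, set $\bsx=\pi(\bsz)\in[0,1]^s$; then
\begin{align*}
f(\pi(\bsz)) = f(\bsx) = \sum_{\bsk\in\NN_0^s}\hat{f}(\bsk)\wal_{\bsk}(\bsx) = \sum_{\bsk\in\NN_0^s}\hat{f}(\bsk)\chi_{\bsk}(\bsz),
\end{align*}
where the first equality is the definition of $\bsx$, the second is the pointwise absolute convergence of the Walsh series recalled above, and the third is \eqref{eq:char_wal} applied termwise. The rearrangement is legitimate because $\sum_{\bsk}|\hat{f}(\bsk)|<\infty$ and $|\chi_{\bsk}(\bsz)|=|\wal_{\bsk}(\bsx)|=1$, so both series converge absolutely and the termwise substitution causes no issue.

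The main obstacle is the digit-ambiguity subtlety in establishing \eqref{eq:char_wal}: $\pi$ is not injective precisely on the $b$-adic rationals, and one must check carefully that the tail digits on which $\wal_k$ does not depend are exactly the ones where the two representations of $\pi(\bsz)$ can disagree. Everything after that is a routine absolute-convergence argument; the continuity hypothesis on $f$ and the summability of $\hat{f}$ are used only to invoke the previously recalled Walsh expansion, not again here.
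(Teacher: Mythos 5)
Your proof has a genuine gap: the key identity $\chi_{\bsk}(\bsz)=\wal_{\bsk}(\pi(\bsz))$ that you propose to establish for \emph{all} $\bsz\in G^s$ is false. Take $b=2$, $s=1$, $k=1$ (so $\kappa_0=1$ and $\wal_1(x)=(-1)^{\xi_1}$), and $z=(0,1,1,1,\ldots)\in G$. Then $\pi(z)=1/2$, whose unique dyadic expansion in the paper's convention is $(1,0,0,\ldots)$, so $\wal_1(\pi(z))=-1$, while $\chi_1(z)=(-1)^{0}=+1$. The error lies in your treatment of the digit ambiguity: when two representations of a $b$-adic rational $x=j/b^{i_0}$ collide, they disagree starting at position $i_0$ (the digit $c$ versus $c-1$, followed by the tails $0,0,\ldots$ versus $b-1,b-1,\ldots$), and $i_0$ can be $\le a$, i.e., within the range of digits on which $\wal_k$ genuinely depends. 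The change in the exponent is $\kappa_{i_0-1}-(b-1)\sum_{i>i_0}\kappa_{i-1}\equiv \kappa_{i_0-1}+\sum_{i>i_0}\kappa_{i-1}\pmod b$, which need not vanish; it does not vanish in the example above. The identity that does hold is only $\chi_{\bsk}(\sigma(\bsx))=\wal_{\bsk}(\bsx)$, i.e., for $\bsz$ in the image of $\sigma$; but the lemma is needed for arbitrary $\bsz\in G^s$, since digital nets with infinite-column generating matrices produce points outside that image. Hence your termwise reduction to the Walsh expansion on $[0,1]^s$ does not go through.

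The conclusion of the lemma is nevertheless true, but for a subtler reason: individual terms $\hat f(\bsk)\chi_{\bsk}(\bsz)$ and $\hat f(\bsk)\wal_{\bsk}(\pi(\bsz))$ can differ while the full sums agree. The paper's proof captures this by working with the partial sums over $k_j<b^n$: using Item~4 of Proposition~\ref{prop:inf_prod} and the identity $\chi_{\bsk}(\sigma(\bsx))=\wal_{\bsk}(\bsx)$ (in the safe direction), the $n$-th partial sum is rewritten as $b^{ns}$ times the integral of $f\circ\pi$ over the set $H(\bsz,n)$ of measure $b^{-ns}$ (via Item~5), and continuity of $f\circ\pi$ then yields convergence of these averages to $f(\pi(\bsz))$. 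To repair your argument you would need to replace the false pointwise identity by this averaging-and-continuity step; no purely termwise substitution works.
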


\begin{proof}
Due to the condition $\sum_{\bsk\in \NN_0^s}|\hat{f}(\bsk)|<\infty$, the right-hand side of (\ref{eq:abs_conv_f}) converges absolutely. Thus it suffices to prove
\begin{align*}
\lim_{n\to \infty}\sum_{\substack{\bsk\in \NN_0^s\\ k_j<b^n, \forall j}}\hat{f}(\bsk)\chi_{\bsk}(\bsz) = f(\pi(\bsz)).
\end{align*}
Since $\pi \circ \sigma = \mathrm{id}_{[0,1]^s}$ and $\chi_{\bsk}(\sigma(\bsx))=\wal_{\bsk}(\bsx)$ for any $\bsk\in \NN_0^s$ and $\bsx\in [0,1]^s$, the sum on the left-hand side above can be rewritten as
\begin{align*}
\sum_{\substack{\bsk\in \NN_0^s\\ k_j<b^n, \forall j}}\hat{f}(\bsk)\chi_{\bsk}(\bsz) & = \sum_{\substack{\bsk\in \NN_0^s\\ k_j<b^n, \forall j}}\chi_{\bsk}(\bsz) \int_{[0,1]^s}f(\bsx)\overline{\wal_{\bsk}(\bsx)}\, \rd \bsx \\
& = \sum_{\substack{\bsk\in \NN_0^s\\ k_j<b^n, \forall j}}\chi_{\bsk}(\bsz) \int_{[0,1]^s}(f\circ \pi \circ \sigma)(\bsx)\overline{\chi_{\bsk}(\sigma(\bsx))}\, \rd \bsx \\
& = \sum_{\substack{\bsk\in \NN_0^s\\ k_j<b^n, \forall j}}\chi_{\bsk}(\bsz) \int_{G^s}(f\circ \pi)(\bsw)\overline{\chi_{\bsk}(\bsw)}\, \rd \tilde{\bsmu}(\bsw) \\
& = \int_{G^s}f(\pi(\bsw))\sum_{\substack{\bsk\in \NN_0^s\\ k_j<b^n, \forall j}}\chi_{\bsk}(\bsz\ominus \bsw)\, \rd \tilde{\bsmu}(\bsw) ,
\end{align*}
where we used Item~4 of Proposition~\ref{prop:inf_prod} in the third equality. Let us define the set $H(\bsz,n)=\{\bsw\in G^s\colon \bsz\ominus \bsw \in H_n^s\}$, where $H_n$ is defined as in Item~5 of Proposition~\ref{prop:inf_prod}. Then for any $\bsz\in G^s$ it holds that $\tilde{\bsmu}(H(\bsz,n))=b^{-ns}$ and 
  \begin{align*}
    \sum_{\substack{\bsk\in \NN_0^s\\ k_j<b^n, \forall j}}\chi_{\bsk}(\bsz\ominus \bsw)= \begin{cases}
     b^{sn} & \text{if $\bsw \in H(\bsz,n)$}, \\
     0 & \text{otherwise}.
\end{cases}
  \end{align*}
Therefore, we have
\begin{align*}
\sum_{\substack{\bsk\in \NN_0^s\\ k_j<b^n, \forall j}}\hat{f}(\bsk)\chi_{\bsk}(\bsz) & = b^{ns}\int_{H(\bsz,n)}f(\pi(\bsw))\, \rd \tilde{\bsmu}(\bsw) \\
& = \frac{1}{\tilde{\bsmu}(H(\bsz,n))}\int_{H(\bsz,n)}f(\pi(\bsw))\, \rd \tilde{\bsmu}(\bsw) \\
& \to f(\pi(\bsz)) \quad \text{as $n\to \infty$},
\end{align*}
where we have the last convergence since $f\circ \pi$ is continuous from the fact that both $f$ and $\sigma$ are continuous.
\end{proof}

For a particular function $f$ which satisfies the continuity and summability conditions in the above lemma, the signed QMC integration error of digital nets over $\ZZ_b$ can be given as follows.
\begin{lemma}
Let $\Pcal$ be a digital net over $\ZZ_b$ in $G^s$ and $\Pcal^{\perp}$ its dual net. For any continuous function $f:[0,1]^s\to \RR$ which satisfies the condition $\sum_{\bsk\in \NN_0^s}|\hat{f}(\bsk)|<\infty$, we have
\begin{align*}
I(f;\pi(\Pcal)) - I(f) = \sum_{\bsk\in \Pcal^{\perp}\setminus \{\bszero\}}\hat{f}(\bsk).
\end{align*}
\end{lemma}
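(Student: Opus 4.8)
The plan is to expand each sampled value $f(\pi(\bsz))$ into its character series, interchange the (finite) sum over the net with the absolutely convergent character series, and then collapse the resulting inner character sum using the orthogonality-type identity of Lemma~\ref{lem:dual_char}.

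First I would write $I(f;\pi(\Pcal)) = \frac{1}{|\Pcal|}\sum_{\bsz\in\Pcal} f(\pi(\bsz))$ and apply Lemma~\ref{lem:abs_conv_f}, which under the stated continuity and summability hypotheses gives $f(\pi(\bsz)) = \sum_{\bsk\in\NN_0^s}\hat{f}(\bsk)\chi_{\bsk}(\bsz)$ for every $\bsz\in G^s$. Substituting this into the quadrature sum yields a double sum over $\bsz\in\Pcal$ and $\bsk\in\NN_0^s$.

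The key manipulation is to exchange the two summations. Since $|\chi_{\bsk}(\bsz)|=1$ for all $\bsk$ and $\bsz$, and $\Pcal$ is finite, the double sum is absolutely convergent because $\sum_{\bsz\in\Pcal}\sum_{\bsk\in\NN_0^s}|\hat{f}(\bsk)\chi_{\bsk}(\bsz)| = |\Pcal|\sum_{\bsk\in\NN_0^s}|\hat{f}(\bsk)| < \infty$. Hence the interchange is justified, and I obtain $\frac{1}{|\Pcal|}\sum_{\bsk\in\NN_0^s}\hat{f}(\bsk)\sum_{\bsz\in\Pcal}\chi_{\bsk}(\bsz)$. Now Lemma~\ref{lem:dual_char} replaces the inner sum by $|\Pcal|$ when $\bsk\in\Pcal^{\perp}$ and by $0$ otherwise, so the whole expression collapses to $\sum_{\bsk\in\Pcal^{\perp}}\hat{f}(\bsk)$.

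It then remains to separate out the true integral. Since $\wal_{\bszero}\equiv 1$, we have $\hat{f}(\bszero)=\int_{[0,1]^s}f(\bsx)\,\rd\bsx = I(f)$; moreover $\bszero\in\Pcal^{\perp}$ trivially, as the zero index vector produces the zero linear combination in $\ZZ_b^m$. Subtracting this $\bsk=\bszero$ term gives the claimed identity. The only genuinely delicate point is the justification of the summation interchange, which the absolute summability hypothesis on $\hat{f}$ together with $|\chi_{\bsk}(\bsz)|=1$ handles cleanly; everything else is a direct application of the preceding lemmas.
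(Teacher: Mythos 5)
Your proposal is correct and follows essentially the same route as the paper's proof: expand each $f(\pi(\bsz))$ via Lemma~\ref{lem:abs_conv_f}, swap the sums, collapse the character sum with Lemma~\ref{lem:dual_char}, and peel off the $\bsk=\bszero$ term using $\hat{f}(\bszero)=I(f)$. The only difference is that you spell out the justification of the interchange and the membership $\bszero\in\Pcal^{\perp}$, which the paper leaves implicit.
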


\begin{proof}
By the definition of Walsh functions, it holds that $I(f)=\hat{f}(\bszero)$. Using the results of Lemmas~\ref{lem:abs_conv_f} and \ref{lem:dual_char}, we have
\begin{align*}
I(f;\pi(\Pcal)) - I(f) & = \frac{1}{|\Pcal|}\sum_{\bsz\in \Pcal}f(\pi(\bsz))-\hat{f}(\bszero) \\
& = \frac{1}{|\Pcal|}\sum_{\bsz\in \Pcal}\sum_{\bsk\in \NN_0^s}\hat{f}(\bsk)\chi_{\bsk}(\bsz) -\hat{f}(\bszero) \\
& = \sum_{\bsk\in \NN_0^s}\hat{f}(\bsk)\frac{1}{|\Pcal|}\sum_{\bsz\in \Pcal}\chi_{\bsk}(\bsz) -\hat{f}(\bszero) \\
& = \sum_{\bsk\in \Pcal^{\perp}}\hat{f}(\bsk)-\hat{f}(\bszero) = \sum_{\bsk\in \Pcal^{\perp}\setminus \{\bszero\}}\hat{f}(\bsk).
\end{align*}
\end{proof}

Combining the above result with Lemma~\ref{lem:bani-dual-net}, we have the following.
\begin{theorem}\label{thm:QMC_error_f}
Let $\Pcal$ be a digital net over $\ZZ_b$ in $G^s$ and $\Pcal^{\perp}$ its dual net. For any continuous function $f:[0,1]^s\to \RR$ which satisfies the condition $\sum_{\bsk\in \NN_0^s}|\hat{f}(\bsk)|<\infty$, we have
\begin{align*}
I(f;\pi(\Pcal_{\bant}))-I(f)= \sum_{\substack{ \bsk\in \Pcal^{\perp}\setminus \{\bszero\}\\ \delta(\bsk)=0}}\hat{f}(\bsk) .
\end{align*}
\end{theorem}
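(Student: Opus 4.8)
The plan is to treat $\Pcal_{\bant}$ not as a transformed point set but as a digital net in its own right, so that the signed-error formula from the lemma immediately preceding this theorem applies to it directly. By Lemma~\ref{lem:bant-digital-net}, $\Pcal_{\bant}$ is itself a digital net over $\ZZ_b$ in $G^s$, with the augmented generating matrices $D_1,\ldots,D_s\in \ZZ_b^{\NN\times(m+1)}$, and its dual net $\Pcal_{\bant}^{\perp}$ is described explicitly by Lemma~\ref{lem:bani-dual-net}. The hypotheses on $f$, namely continuity together with $\sum_{\bsk\in\NN_0^s}|\hat{f}(\bsk)|<\infty$, are conditions on $f$ alone and do not involve the point set, so they transfer unchanged when $\Pcal$ is replaced by $\Pcal_{\bant}$; in particular the Walsh expansion supplied by Lemma~\ref{lem:abs_conv_f} remains valid.

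First I would apply the preceding lemma with $\Pcal$ replaced by $\Pcal_{\bant}$, which yields
\[
I(f;\pi(\Pcal_{\bant})) - I(f) = \sum_{\bsk\in \Pcal_{\bant}^{\perp}\setminus\{\bszero\}}\hat{f}(\bsk).
\]
Next I would substitute the characterization $\Pcal_{\bant}^{\perp} = \Pcal^{\perp}\cap\{\bsk\in\NN_0^s:\delta(\bsk)=0\}$ from Lemma~\ref{lem:bani-dual-net}. It then remains only to verify the handling of the zero index: since $\delta(\bszero)=0$ and $\bszero\in\Pcal^{\perp}$, the vector $\bszero$ lies in the intersection, so removing it from $\Pcal_{\bant}^{\perp}$ is exactly the same as imposing $\bsk\in\Pcal^{\perp}\setminus\{\bszero\}$ together with the constraint $\delta(\bsk)=0$. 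This produces precisely the index set appearing in the statement.

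There is essentially no substantive obstacle here: the theorem is a direct composition of Lemma~\ref{lem:bant-digital-net}, which legitimizes viewing $\Pcal_{\bant}$ as a genuine digital net so that the Walsh-coefficient error formula is available, and Lemma~\ref{lem:bani-dual-net}, which supplies the dual net. The only point requiring a moment's care is the bookkeeping of $\bszero$ described above; everything else is a mechanical substitution.
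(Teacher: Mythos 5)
Your proposal is correct and is exactly the argument the paper intends: the theorem is stated as an immediate combination of the signed-error lemma (applied to the digital net $\Pcal_{\bant}$, which Lemma~\ref{lem:bant-digital-net} guarantees is a digital net) with the dual-net characterization of Lemma~\ref{lem:bani-dual-net}. Your extra remark that $\bszero$ lies in the intersection, so that excising it commutes with imposing $\delta(\bsk)=0$, is the only detail the paper leaves implicit, and you handle it correctly.
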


\begin{remark}\label{rem:QMC_error_f}
In general, we cannot expect a cancellation of $\hat{f}(\bsk)$. Thus, it is often the case that the absolute integration error is considered instead of the signed integration error. In this case, due to the triangle inequality, we have the following error bound
\begin{align*}
\lvert I(f;\pi(\Pcal_{\bant}))-I(f)\rvert \le \sum_{\substack{ \bsk\in \Pcal^{\perp}\setminus \{\bszero\}\\ \delta(\bsk)=0}}|\hat{f}(\bsk)|.
\end{align*}
The right-hand side above is always less than or equal to $\sum_{\bsk\in \Pcal^{\perp}\setminus \{\bszero\}}|\hat{f}(\bsk)|$, which is a bound on $\lvert I(f;\pi(\Pcal))-I(f)\rvert$.
\end{remark}

Let us move on to the worst-case error in a reproducing kernel Hilbert space (RHKS). Let $H$ be a RHKS with reproducing kernel $K:[0,1]^s\times [0,1]^s \to \RR$. We denote the inner product in $H$ by $\langle f,g \rangle_{H}$ for $f,g\in H$ and its associated norm by $\lVert f\rVert_{H}:=\sqrt{\langle f,f \rangle_{H}}$. The worst-case error in $H$ of QMC integration using a finite point set $P\subset [0,1]^s$ is defined by
\begin{align*}
e^{\wor}(H;P) := \sup_{\substack{f\in H \\ \lVert f\rVert_{H}\le 1}}\left| I(f;P)-I(f)\right|.
\end{align*}
It is known that if a reproducing kernel $K$ satisfies $\int_{[0,1]^s}\sqrt{K(\bsx,\bsx)}\, \rd \bsx< \infty$, we have
\begin{align*}
& (e^{\wor}(H;P))^2 \\
& = \int_{[0,1]^{2s}}K(\bsx,\bsy)\, \rd \bsx\, \rd \bsy-\frac{2}{|P|}\sum_{\bsx\in P}\int_{[0,1]^s}K(\bsx,\bsy)\, \rd \bsy+\frac{1}{|P|^2}\sum_{\bsx,\bsy\in P}K(\bsx,\bsy),
\end{align*}
see for instance \cite{SW98}. Additionally if $K$ satisfies $\sum_{\bsk,\bsl\in \NN_0^s}|\hat{K}(\bsk,\bsl)|< \infty$, where $\hat{K}(\bsk,\bsl)$ denotes the $(\bsk,\bsl)$-th Walsh coefficient of $K$, i.e.,
\begin{align*}
\hat{K}(\bsk,\bsl) := \int_{[0,1]^{2s}}K(\bsx,\bsy)\overline{\wal_{\bsk}(\bsx)}\wal_{\bsl}(\bsy)\, \rd \bsx\, \rd \bsy,
\end{align*}
and if $\Pcal$ is a digital net in $G^s$, it holds from \cite[Proposition~19]{GSYxx} that
\begin{align*}
(e^{\wor}(H;\pi(\Pcal)))^2 = \sum_{\bsk,\bsl\in \Pcal^{\perp}\setminus \{\bszero\}}\hat{K}(\bsk,\bsl).
\end{align*}
Combining the above result with Lemma~\ref{lem:bani-dual-net}, we have the following.

\begin{theorem}\label{thm:QMC_error_RKHS}
Let $\Pcal$ be a digital net over $\ZZ_b$ in $G^s$ and $\Pcal^{\perp}$ its dual net. Let $H$ be a reproducing kernel Hilbert space with a continuous reproducing kernel $K:[0,1]^s\times [0,1]^s\to \RR$ which satisfies $\int_{[0,1]^s}\sqrt{K(\bsx,\bsx)}\, \rd \bsx< \infty$ and $\sum_{\bsk,\bsl\in \NN_0^s}|\hat{K}(\bsk,\bsl)|<\infty$. Then we have
\begin{align*}
\left( e^{\wor}(H;\pi(\Pcal_{\bant}))\right)^2 = \sum_{\substack{ \bsk,\bsl\in \Pcal^{\perp}\setminus \{\bszero\}\\ \delta(\bsk)= \delta(\bsl)= 0}}\hat{K}(\bsk,\bsl) .
\end{align*}
\end{theorem}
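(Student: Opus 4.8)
The plan is to mirror the proof of Theorem~\ref{thm:QMC_error_f}, replacing the signed-error formula for a plain digital net with the worst-case-error formula recalled immediately above the statement. The crucial observation is that, by Lemma~\ref{lem:bant-digital-net}, the antithetic sample $\Pcal_{\bant}$ is itself a digital net over $\ZZ_b$ in $G^s$, with generating matrices $D_1,\ldots,D_s$. The hypotheses on $K$---continuity, $\int_{[0,1]^s}\sqrt{K(\bsx,\bsx)}\,\rd\bsx<\infty$, and $\sum_{\bsk,\bsl\in\NN_0^s}|\hat{K}(\bsk,\bsl)|<\infty$---concern only the kernel and are untouched by passing from $\Pcal$ to $\Pcal_{\bant}$. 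Hence the formula from \cite[Proposition~19]{GSYxx} applies verbatim to the digital net $\Pcal_{\bant}$, and the theorem will follow purely by bookkeeping on index sets.

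First I would apply that recalled formula with $\Pcal_{\bant}$ in place of $\Pcal$, yielding
\[
\left(e^{\wor}(H;\pi(\Pcal_{\bant}))\right)^2 = \sum_{\bsk,\bsl\in\Pcal_{\bant}^{\perp}\setminus\{\bszero\}}\hat{K}(\bsk,\bsl).
\]
Next I would invoke Lemma~\ref{lem:bani-dual-net}, which identifies $\Pcal_{\bant}^{\perp}=\Pcal^{\perp}\cap\{\bsk\in\NN_0^s:\delta(\bsk)=0\}$. Since $\delta(\bszero)=0$, deleting $\bszero$ commutes with this intersection, so $\Pcal_{\bant}^{\perp}\setminus\{\bszero\}=\{\bsk\in\Pcal^{\perp}\setminus\{\bszero\}:\delta(\bsk)=0\}$. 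Substituting this index set---once for the $\bsk$-variable and once for the $\bsl$-variable, which range independently---immediately produces the claimed double sum restricted to $\delta(\bsk)=\delta(\bsl)=0$.

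There is no serious obstacle: the theorem is a direct corollary of Lemma~\ref{lem:bant-digital-net}, Lemma~\ref{lem:bani-dual-net}, and the recalled worst-case-error formula. The one point worth flagging is that the extra column $(1,1,\ldots)^{\top}$ appended to each $C_j$ has infinitely many non-zero entries, so the generating matrices $D_j$ of $\Pcal_{\bant}$ are genuinely infinite-column; this is exactly the situation that the infinite direct product framework of Section~\ref{subsec:inf_prod} was set up to accommodate, and it is what permits the formula of \cite[Proposition~19]{GSYxx}---which is established in that generality---to be applied without modification. Beyond this, one need only observe that the two indices $\bsk,\bsl$ are each restricted to $\Pcal_{\bant}^{\perp}\setminus\{\bszero\}$ independently, so that both digit-sum conditions $\delta(\bsk)=0$ and $\delta(\bsl)=0$ appear with no coupling between the two summation variables.
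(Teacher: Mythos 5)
Your proposal is correct and matches the paper's own argument: the paper likewise obtains the theorem by applying the recalled worst-case-error formula from \cite[Proposition~19]{GSYxx} to the digital net $\Pcal_{\bant}$ and then substituting the description of $\Pcal_{\bant}^{\perp}$ from Lemma~\ref{lem:bani-dual-net}. Your additional remarks on the infinite-column generating matrices and on $\delta(\bszero)=0$ are accurate and only make explicit what the paper leaves implicit.
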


\begin{remark}\label{rem:QMC_error_RKHS}
Again, in general, we cannot expect a cancellation of $\hat{K}(\bsk,\bsl)$. Due to the triangle inequality, we have the following worst-case error bound
\begin{align*}
\left( e^{\wor}(H;\pi(\Pcal_{\bant}))\right)^2 \le \sum_{\substack{ \bsk,\bsl\in \Pcal^{\perp}\setminus \{\bszero\}\\ \delta(\bsk)= \delta(\bsl)= 0}}\lvert \hat{K}(\bsk,\bsl)\rvert.
\end{align*}
The right-hand side above is always less than or equal to $\sum_{\bsk,\bsl\in \Pcal^{\perp}\setminus \{\bszero\}}\lvert \hat{K}(\bsk,\bsl)\rvert$, which is a bound on $\left( e^{\wor}(H;\pi(\Pcal))\right)^2$.
\end{remark}

It can be seen from Theorems~\ref{thm:QMC_error_f} and \ref{thm:QMC_error_RKHS} that analyzing the Walsh coefficients play a central role in evaluating the integration error. We refer to \cite{BD09,Dick09,SYxx,Yoshikixx} and the references therein for recent results on the Walsh coefficients of smooth functions, some of which shall be used in the next section.
\section{Existence of good higher order polynomial lattices with antithetics}\label{sec:exist}
In this section, by using the result of Section~\ref{sec:banti}, we prove the existence of higher order polynomial lattice point sets with $b$-adic antithetics which achieve almost the optimal rate of convergence for smooth functions in weighted Sobolev spaces. For this purpose we first introduce weighted Sobolev spaces and higher order polynomial lattice point sets in Subsections~\ref{subsec:Sobolev} and \ref{subsec:HOPL}, respectively.
\subsection{Weighted Sobolev spaces}\label{subsec:Sobolev}
Here we introduce a weighted Sobolev space of smoothness $\alpha\in \NN$, $\alpha\ge 2$. Let us consider the one-dimensional unweighted case first. The Sobolev space which we consider is given by
\begin{align*}
 H_{\alpha} & := \Big\{f \colon [0,1]\to \RR \mid \\
 & \qquad f^{(r)} \colon \text{absolutely continuous for $r=0,\ldots,\alpha-1$}, f^{(\alpha)}\in L^2[0,1]\Big\},
\end{align*}
where $f^{(r)}$ denotes the $r$-th derivative of $f$. This space is indeed a reproducing kernel Hilbert space with an inner product $\langle \cdot, \cdot \rangle_{H_{\alpha}}$ and a reproducing kernel $K_{\alpha}\colon [0,1]\times [0,1]\to \RR$ given by 
\begin{align*}
 \langle f, g \rangle_{H_{\alpha}} = \sum_{r=0}^{\alpha-1}\int_{0}^{1}f^{(r)}(x)\, \rd x \int_{0}^{1}g^{(r)}(x)\, \rd x + \int_{0}^{1}f^{(\alpha)}(x)g^{(\alpha)}(x)\, \rd x,
\end{align*}
for $f,g\in H_{\alpha}$ and
\begin{align*}
 K_{\alpha}(x,y) = \sum_{r=0}^{\alpha}\frac{\Bcal_r(x)\Bcal_r(y)}{(r!)^2}+(-1)^{\alpha+1}\frac{\Bcal_{2\alpha}(|x-y|)}{(2\alpha)!} ,
\end{align*}
for $x,y\in [0,1]$, where $\Bcal_r$ denotes the Bernoulli polynomial of degree $r$.

Let us move on to the $s$-dimensional weighted case. In the following we write $\{1:n\}:=\{1,\ldots,n\}$ for $n\in \NN$. Let $\bsgamma=(\gamma_u)_{u\subseteq \{1:s\}}$ be a set of non-negative real numbers which are called weights. Note that the weights play a role in moderating the importance of different variables or groups of variables in function spaces \cite{SW98}. Now the weighted Sobolev space $H_{\alpha,\bsgamma}$ which we consider is a reproducing kernel Hilbert space whose inner product $\langle \cdot, \cdot \rangle_{H_{\alpha,\bsgamma}}$ and reproducing kernel $K_{\alpha,\bsgamma}\colon [0,1]^s\times [0,1]^s\to \RR$ are given as follows: 
\begin{align*}
 \langle f, g \rangle_{H_{\alpha,\bsgamma}} & = \sum_{u\subseteq \{1:s\}}\gamma_u^{-1}\sum_{v\subseteq u}\sum_{\bsr_{u\setminus v}\in \{1:\alpha-1\}^{|u\setminus v|}} \\
 & \qquad \times \int_{[0,1]^{|v|}}\left(\int_{[0,1]^{s-|v|}}f^{(\bsr_{u\setminus v},\bsalpha_v,\bszero)}(\bsx)\, \rd \bsx_{\{1:s\}\setminus v}\right) \\
 & \qquad \qquad \times \left(\int_{[0,1]^{s-|v|}} g^{(\bsr_{u\setminus v},\bsalpha_v,\bszero)}(\bsx) \, \rd \bsx_{\{1:s\}\setminus v}\right) \, \rd \bsx_v ,
\end{align*}
for $f,g\in H_{\alpha,\bsgamma}$ and
\begin{align*}
 K_{\alpha,\bsgamma}(\bsx,\bsy) = \sum_{u\subseteq \{1:s\}}\gamma_u \prod_{j\in u}\left\{\sum_{r=1}^{\alpha} \frac{\Bcal_r(x_j)\Bcal_r(y_j)}{(r!)^2}+(-1)^{\alpha+1}\frac{\Bcal_{2\alpha}(|x_j-y_j|)}{(2\alpha)!}\right\} ,
\end{align*}
for $\bsx=(x_1,\ldots,x_s),\bsy=(y_1,\ldots,y_s)\in [0,1]^s$. In the above, we use the following notation: For $v\subseteq \{1:s\}$ and $\bsx\in [0,1]^s$, let $\bsx_v=(x_j)_{j\in v}$. For $v\subseteq u\subseteq \{1:s\}$ and $\bsr_{u\setminus v}=(r_j)_{j\in u\setminus v}$, $(\bsr_{u\setminus v},\bsalpha_v,\bszero)$ denotes the $s$-dimensional vector whose $j$-th component is $r_j$ if $j\in u\setminus v$, $\alpha$ if $j\in v$, and $0$ otherwise. Note that the empty product always equals $1$ and we set $0/0=0$.

\subsection{Higher order polynomial lattice point sets}\label{subsec:HOPL}
We define higher order polynomial lattice point sets as digital nets in $G^s$. Note that they are originally defined as digital nets in $[0,1]^s$, whose construction is based on rational functions over finite fields \cite{DP07,Nied92}. In the remainder of this section, let $b$ be a prime.

We denote by $\ZZ_b[x]$ the set of all polynomials in $\ZZ_b$ and by $\ZZ_b((x^{-1}))$ the field of formal Laurent series in $\ZZ_b$. Every element of $\ZZ_b((x^{-1}))$ is given in the form $\sum_{l=w}^{\infty}t_lx^{-l}$ with some integer $w$ and $t_l\in \ZZ_b$. The definition of higher order polynomial lattice point sets is given as follows.
\begin{definition}
For $m,n\in \NN$ with $m\le n$, let $p\in \ZZ_b[x]$ with $\deg(p)=n$ and $\bsq=(q_1,\ldots,q_s)\in (\ZZ_b[x])^s$ with $\deg(q_j)<n$. For $1\le j\le s$, let us consider the expansion
\begin{align*}
\frac{q_j(x)}{p(x)} = \sum_{l=1}^{\infty}t_{l}^{(j)}x^{-l}\in \ZZ_b((x^{-1})).
\end{align*}
A higher order polynomial lattice point set in $G^s$ with modulus $p$ and generating vector $\bsq$, denoted by $\Pcal(p,\bsq)$, is a digital net over $\ZZ_b$ in $G^s$ with generating matrices $C_1,\ldots,C_s\in \ZZ_b^{\NN\times m}$, where each $C_j=(c_{l,r}^{(j)})$ is given by
\begin{align*}
c_{l,r}^{(j)} = \begin{cases}
t_{l+r-1}^{(j)} & \text{if $l\le n$,} \\
0 & \text{otherwise.}
\end{cases}
\end{align*}
\end{definition}

We shall often identify an non-negative integer $k=\kappa_0+\kappa_1 b+\cdots$ with a polynomial $k(x)=\kappa_0+\kappa_1 x+\cdots$. Moreover, for $n\in \NN$, the truncated polynomial $\tr_n(k)$ of $k$ is defined by 
\begin{align*}
\tr_n(k)(x) := \kappa_0+\kappa_1 x+\cdots+\kappa_{n-1}b^{n-1}.
\end{align*}
The following lemma gives another form of the dual net of $\Pcal(p,\bsq)$, see \cite[Lemma~15.25 \& Definition~15.26]{DPbook} for the proof.

\begin{lemma}\label{lem:dual_net_HOPL}
For $m,n\in \NN$ with $m\le n$, let $p\in \ZZ_b[x]$ with $\deg(p)=n$ and $\bsq=(q_1,\ldots,q_s)\in (\ZZ_b[x])^s$ with $\deg(q_j)<n$. The dual net of the higher order polynomial lattice point set $\Pcal(p,\bsq)$ is given by
\begin{align*}
\Pcal^{\perp}(p,\bsq) & = \big\{ \bsk=(k_1,\ldots,k_s)\in \NN_0^s \colon \\
                      & \qquad \tr_n(k_1)q_1+\cdots+\tr_n(k_s)q_s\equiv a \pmod p \; \text{with}\; \deg(a)<n-m \big\} .
\end{align*}
\end{lemma}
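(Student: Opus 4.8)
The plan is to establish the characterization of $\Pcal^{\perp}(p,\bsq)$ by working directly from the definition of the dual net (Definition~\ref{def:dual_net}) and unpacking what the digital-net condition $\vec{k}_1 C_1\oplus \cdots \oplus \vec{k}_s C_s = \bszero$ means in terms of the generating matrices $C_j$ coming from the Laurent expansions of $q_j/p$. First I would fix $\bsk=(k_1,\ldots,k_s)$ and write out the $r$-th coordinate ($1\le r\le m$) of $\vec{k}_j C_j$, which by the formula $c_{l,r}^{(j)} = t_{l+r-1}^{(j)}$ (for $l\le n$, else $0$) becomes $\sum_{l\ge 0} \kappa_{j,l}\, t_{l+r}^{(j)}$ where $\kappa_{j,l}$ are the $b$-adic digits of $k_j$. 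The key observation is that because $C_j$ has only $n$ nonzero rows, only the digits of $k_j$ up to index $n-1$ enter, which is exactly why the truncation $\tr_n(k_j)$ appears on the polynomial side.

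The main idea is to translate the digit condition into a statement about formal Laurent series. The natural object to introduce is $\sum_{j=1}^s \tr_n(k_j)(x)\, q_j(x)/p(x) \in \ZZ_b((x^{-1}))$, and to show that its Laurent coefficients at $x^{-r}$ for $r=1,\ldots,m$ are precisely the coordinates of $\vec{k}_1 C_1\oplus \cdots \oplus \vec{k}_s C_s$. Carrying out the Cauchy-product bookkeeping, the coefficient of $x^{-r}$ in $\tr_n(k_j)\cdot (q_j/p)$ equals $\sum_{l} \kappa_{j,l}\, t_{r+l}^{(j)}$, matching the $r$-th entry of $\vec{k}_j C_j$ after re-indexing. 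Hence the condition $\vec{k}_1 C_1\oplus \cdots \oplus \vec{k}_s C_s = \bszero \in \ZZ_b^m$ holds if and only if the Laurent series $\sum_j \tr_n(k_j) q_j/p$ has vanishing coefficients at $x^{-1},\ldots,x^{-m}$.

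Next I would convert this vanishing-coefficient statement into the stated congruence modulo $p$. Writing $\sum_j \tr_n(k_j)q_j \equiv a \pmod{p}$ for the unique remainder $a$ with $\deg(a)<n=\deg(p)$, I would argue that $\left(\sum_j \tr_n(k_j)q_j\right)/p$ and $a/p$ differ by a polynomial, so their negative-power Laurent coefficients coincide; the coefficients of $x^{-1},\ldots,x^{-m}$ of $a/p$ vanish exactly when the leading terms of $a$ are absent, i.e. when $\deg(a)<n-m$. This is the step that pins down the degree bound $\deg(a)<n-m$, and it is essentially a statement about the valuation of $a/p$ in $\ZZ_b((x^{-1}))$: the coefficients at $x^{-1},\dots,x^{-m}$ all vanish iff the expansion of $a/p$ starts no earlier than $x^{-(m+1)}$, which translates to $\deg a - \deg p \le -(m+1)$, i.e. $\deg(a)\le n-m-1$.

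The step I expect to be the main obstacle is the careful Laurent-coefficient accounting that links the abstract matrix product $\vec{k}_j C_j$ to the coefficients of $\tr_n(k_j)q_j/p$, specifically making sure the index shift $c_{l,r}^{(j)}=t_{l+r-1}^{(j)}$ is threaded correctly against the digit indexing of $\vec{k}_j=(\kappa_0,\kappa_1,\ldots)$ and that the truncation at $l\le n$ (which forces the appearance of $\tr_n$) is handled without off-by-one errors. Since the excerpt cites \cite[Lemma~15.25 \& Definition~15.26]{DPbook} for the proof, I would either reproduce this bookkeeping in full or, more economically, invoke that reference for the identity between the two descriptions of the dual net after verifying that the generating-matrix convention used here (with infinite columns truncated at row $n$) agrees with the one there; the remaining content is then purely the degree-bound translation described above.
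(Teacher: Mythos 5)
Your argument is correct and is exactly the standard proof that the paper delegates to \cite[Lemma~15.25 \& Definition~15.26]{DPbook}: identifying the $r$-th entry of $\vec{k}_jC_j$ with the coefficient of $x^{-r}$ in $\tr_n(k_j)q_j/p$, and then translating the vanishing of the coefficients of $x^{-1},\ldots,x^{-m}$ into the degree bound $\deg(a)<n-m$ via the valuation of $a/p$ in $\ZZ_b((x^{-1}))$. The index bookkeeping you flag as the main obstacle checks out: the $r$-th entry of $\vec{k}_jC_j$ is $\sum_{i=0}^{n-1}\kappa_{j,i}t^{(j)}_{i+r}$, which is precisely that Laurent coefficient.
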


\subsection{Existence result}\label{subsec:exist_result}
We now prove the existence of good higher order polynomial lattice point sets with $b$-adic antithetics for QMC integration in $H_{\alpha,\bsgamma}$. More precisely, we prove the following theorem.
\begin{theorem}\label{thm:existence}
For an integer $\alpha \ge 2$ and a set of weights $\bsgamma$, let $H_{\alpha,\bsgamma}$ be the weighted Sobolev space. For $m,n\in \NN$ with $n\ge m$, let $p\in \ZZ_b[x]$ be irreducible with $\deg(p)=n$. Then there exists a generating vector $\bsq^*=(q^*_1,\ldots,q^*_s)\in (\ZZ_b[x])^s$ with $\deg(q^*_j)<n$ which satisfies
\begin{align*}
e^{\wor}(H_{\alpha,\bsgamma};\pi(\Pcal_{\ant}(p,\bsq^{*}))) \le \frac{1}{b^{\min(m/\lambda,2n)}}\left[\sum_{\emptyset \ne u\subseteq \{1,\ldots,s\}}\gamma_u^{\lambda/2}C_{\alpha,\lambda}^{|u|}\right]^{1/\lambda},
\end{align*}
for any $1/\alpha < \lambda \le 1$, where $C_{\alpha,\lambda}$ is positive and depends only on $\alpha$ and $\lambda$.
\end{theorem}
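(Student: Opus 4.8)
The plan is to use the averaging (probabilistic) argument over the generating vector $\bsq$, combined with the worst-case error formula from Theorem~\ref{thm:QMC_error_RKHS}. First I would start from the bound in Remark~\ref{rem:QMC_error_RKHS}, namely
\begin{align*}
\left(e^{\wor}(H_{\alpha,\bsgamma};\pi(\Pcal_{\ant}(p,\bsq)))\right)^2 \le \sum_{\substack{\bsk,\bsl\in \Pcal^{\perp}(p,\bsq)\setminus\{\bszero\}\\ \delta(\bsk)=\delta(\bsl)=0}}\lvert \hat{K}_{\alpha,\bsgamma}(\bsk,\bsl)\rvert.
\end{align*}
Since $K_{\alpha,\bsgamma}$ factorizes as a sum over $u\subseteq\{1:s\}$ of products of one-dimensional kernels, its Walsh coefficients factorize correspondingly, so I would invoke the known decay estimates for the one-dimensional Walsh coefficients of $H_\alpha$ (the references \cite{Dick09,Yoshikixx}): there is a constant such that $\lvert \hat{K}_{\alpha}(k,l)\rvert$ is bounded in terms of $b^{-\mu_\alpha(k)-\mu_\alpha(l)}$ where $\mu_\alpha$ is the relevant Dick-type weight, and $\hat{K}_\alpha(k,l)=0$ unless the nonzero pattern of $k$ and $l$ match. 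This lets me write the right-hand side as a sum over $u$ weighted by $\gamma_u$ of a product over $j\in u$ of one-dimensional Walsh-coefficient sums restricted to the dual net and to $\delta=0$.

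Next I would apply the standard Jensen-type inequality: for $0<\lambda\le 1$, $\left(\sum a_i\right)^\lambda \le \sum a_i^\lambda$ for nonnegative $a_i$. Raising the error bound to the power $\lambda$ (with $1/\alpha<\lambda\le 1$ to guarantee summability of the $\lambda$-th powers of the Walsh-coefficient decay), I would bound $\left(e^{\wor}\right)^{2\lambda}$ by a sum over nonzero $\bsk,\bsl$ in the dual net with $\delta=0$ of $\lvert\hat{K}\rvert^\lambda$. Then I would average this quantity over all admissible generating vectors $\bsq\in(\ZZ_b[x])^s$ with $\deg q_j<n$. The averaging step uses the explicit description of $\Pcal^\perp(p,\bsq)$ from Lemma~\ref{lem:dual_net_HOPL}: for fixed nonzero $\bsk$ (equivalently fixed $\tr_n(k_j)$), the number of $\bsq$ for which $\tr_n(k_1)q_1+\cdots+\tr_n(k_s)q_s\equiv a\pmod p$ with $\deg(a)<n-m$ is controlled because $p$ is irreducible, so each nonzero residue class is hit equally often; the average of the indicator that $\bsk\in\Pcal^\perp$ is therefore roughly $b^{-(2n-m)}$ per summand. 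Averaging over both $\bsk$ and $\bsl$ independently then contributes a factor on the order of $b^{-2(2n-m)}$ or, after accounting for the truncation at level $n$, the combined factor yielding the exponent $\min(m/\lambda,2n)$.

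Since the average is bounded, there must exist at least one $\bsq^*$ achieving at most the average, which gives the stated bound after taking the $1/(2\lambda)$ power and collecting the constant $C_{\alpha,\lambda}$ from the one-dimensional sums $\sum_{k}\lvert\hat{K}_\alpha(k,l)\rvert^\lambda$ restricted appropriately. The main obstacle I anticipate is handling the interaction between the truncation operator $\tr_n$ and the two separate constraints (membership in $\Pcal^\perp$ and the $b$-adic antithetic constraint $\delta(\bsk)=\delta(\bsl)=0$) in the counting argument; in particular, I must verify that restricting to $\delta(\bsk)=0$ does not spoil the equidistribution of residues used in the averaging, and I must correctly track how the two rate contributions $b^{-m/\lambda}$ (from the averaging over the dual net, for the ``low-order'' part $k_j<b^n$) and $b^{-2n}$ (from the Walsh-coefficient decay of the ``high-order'' tail $k_j\ge b^n$, which is independent of $\bsq$ since those $k$ automatically lie in the dual net) combine to produce the exponent $\min(m/\lambda,2n)$. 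Getting this split precisely right, so that the claimed convergence rate emerges rather than a weaker one, is the delicate part of the argument.
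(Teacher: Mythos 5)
Your overall skeleton matches the paper's: bound the squared worst-case error by the absolute Walsh coefficients over the dual net, factor the kernel so that the double sum over $\bsk,\bsl$ collapses to (the square of) a single sum of $b^{-\mu_{\alpha}(\bsk_u)}$ over projections $u$, apply Jensen with $1/\alpha<\lambda\le 1$, and average over $\bsq$ using the counting result for $\Pcal^{\perp}(p,\bsq)$ from \cite{DP07}. Two of your quantitative claims are off, though one is repairable. The average of the indicator $\bsk\in\Pcal^{\perp}(p,\bsq)$ over $\bsq$ is $b^{-m}$ for the ``low-order'' $\bsk$ (those with some $\tr_n(k_j)\not\equiv 0$), not $b^{-(2n-m)}$: the congruence class is equidistributed over $b^n$ residues and the admissible set $\{\deg(a)<n-m\}$ has $b^{n-m}$ of them. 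Also, you cannot average over the memberships of $\bsk$ and $\bsl$ ``independently'' — both are determined by the same $\bsq$ — but this is moot once you have factored the double sum into a square of a single sum, which you already propose to do.

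The genuine gap is that your argument, as described, cannot produce the exponent $\min(m/\lambda,2n)$; it only yields $\min(m/\lambda,n)$. The ``high-order tail'' consists of $\bsk_u$ with $b^n\mid k_j$ for all $j\in u$ (these lie in the dual net for every $\bsq$). For $|u|\ge 2$ the decay $b^{-\lambda\mu_{\alpha}(k_j)}\le b^{-\lambda n}$ per coordinate already gives $b^{-2\lambda n}$, but for the singleton sets $u=\{j\}$ the condition $b^n\mid k$ alone only gives $\sum_{b^n\mid k}b^{-\lambda\mu_{\alpha}(k)}\le A_{\alpha,\lambda}b^{-\lambda n}$, which after taking the $1/\lambda$ power caps the rate at $b^{-n}$, not $b^{-2n}$. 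The entire point of the antithetic construction — and the step you are missing — is that the constraint $\delta(k)=0$ forces a nonzero $k$ to have at least two nonzero $b$-adic digits; combined with $b^n\mid k$ both digits sit above level $n$, so $\mu_{\alpha}(k)\ge 2n$ (using $\alpha\ge 2$) and the singleton sum improves to $A_{\alpha,\lambda}b^{-2\lambda n}$ (the second part of Item~2 of Lemma~\ref{lem:dick_weight}). You treat $\delta(\bsk)=0$ only as a potential obstruction to the residue equidistribution (it is harmless there, since one simply drops the constraint and overcounts), rather than as the source of the gain. Without exploiting it in the $|u|=1$ tail, you would need $n\ge\alpha m$ rather than $n\ge\alpha m/2$ to reach the optimal rate, i.e.\ you would reprove the result of \cite{DP07} instead of Theorem~\ref{thm:existence}.
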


\begin{remark}
Let $n\ge \alpha m/2$. Then we have $\min(m/\lambda,2n)=m/\lambda$ for any $1/\alpha < \lambda \le 1$. From the above theorem and the fact that the number of points is given by $N=|\Pcal_{\ant}(p,\bsq^{*})|=b^{m+1}$, we have
\begin{align*}
e^{\wor}(H_{\alpha,\bsgamma};\pi(\Pcal_{\ant}(p,\bsq^{*}))) & \le \frac{1}{b^{m/\lambda}}\left[\sum_{\emptyset \ne u\subseteq \{1,\ldots,s\}}\gamma_u^{\lambda/2}C_{\alpha,\lambda}^{|u|}\right]^{1/\lambda} \\
& = \frac{1}{N^{1/\lambda}}\left[\sum_{\emptyset \ne u\subseteq \{1,\ldots,s\}}\gamma_u^{\lambda/2}C_{\alpha,\lambda}^{|u|}b\right]^{1/\lambda} ,
\end{align*}
for any $1/\alpha < \lambda \le 1$. Since we cannot achieve the convergence rate of the worst-case error of order $N^{-\alpha}$ in $H_{\alpha,\bsgamma}$ \cite{GSYxx2}, this result is almost optimal. Without $b$-adic antithetics, we need $n\ge \alpha (m+1)$ to achieve almost the optimal convergence rate of the worst-case error when the number of points is $b^{m+1}$ \cite{DP07}. This implies that we can find good point sets among a smaller number of candidates by the use of $b$-adic antithetics.

The case where $\alpha=2$ and $n=m$ seems particularly interesting. In this case we just have a classical polynomial lattice point set as introduced in \cite{Nied92}.
With the help of $b$-adic antithetics, it can achieve the convergence rate of order $N^{-2+\varepsilon}$ with arbitrarily small $\varepsilon>0$.
\end{remark}

In order to prove Theorem~\ref{thm:existence}, we need to introduce some more notation and some lemmas.

For $k\in \NN$, we denote its $b$-adic expansion by $k=\kappa_1b^{a_1-1}+\kappa_2b^{a_2-1}+\cdots +\kappa_vb^{a_v-1}$ with $a_1>a_2>\cdots >a_v>0$ and $\kappa_1,\ldots,\kappa_v\in \{1,\ldots,b-1\}$. Then we recall the definition of the function $\mu_{\alpha}:\NN_0\to \RR$ given by
\begin{align*}
\mu_{\alpha}(k) := \sum_{i=1}^{\min(v,\alpha)}a_i,
\end{align*}
and $\mu_{\alpha}(0)=0$, see \cite{Dick09}. For $\bsk=(k_1,\ldots,k_s)\in \NN_0^s$, we define
\begin{align*}
\mu_{\alpha}(\bsk) := \sum_{j=1}^{s}\mu_{\alpha}(k_j) .
\end{align*}
Regarding this function, we have the following result.
\begin{lemma}\label{lem:dick_weight}
Let $\alpha \ge 2$ be an integer. For $1/\alpha<\lambda\le 1$, let $A_{\alpha,\lambda}$ be given by
\begin{align*}
A_{\alpha,\lambda} = \sum_{v=1}^{\alpha-1}\prod_{i=1}^{v}\left( \frac{b-1}{b^{\lambda i}-1}\right)+\frac{b^{\lambda \alpha}-1}{b^{\lambda \alpha}-b}\prod_{i=1}^{\alpha}\left( \frac{b-1}{b^{\lambda i}-1}\right).
\end{align*}
The following holds true.
\begin{enumerate}
\item For any $1/\alpha<\lambda\le 1$, we have
  \begin{align*}
  \sum_{k=1}^{\infty}b^{-\lambda \mu_{\alpha}(k)}=A_{\alpha,\lambda} .
  \end{align*}
\item For any $1/\alpha<\lambda\le 1$ and $n\in \NN$, we have
  \begin{align*}
  \sum_{\substack{k=1\\ b^n\mid k}}^{\infty}b^{-\lambda \mu_{\alpha}(k)} \le \frac{A_{\alpha,\lambda}}{b^{\lambda n}} \quad \text{and}\quad \sum_{\substack{k=1\\ \delta(k)=0\\ b^n\mid k}}^{\infty}b^{-\lambda \mu_{\alpha}(k)}\le \frac{A_{\alpha,\lambda}}{b^{2\lambda n}}.
  \end{align*}
\end{enumerate}
\end{lemma}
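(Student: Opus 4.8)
The plan is to prove Item~1 by a direct computation, organizing the sum $\sum_{k\ge 1}b^{-\lambda\mu_\alpha(k)}$ according to $v$, the number of nonzero digits of $k$, and then to deduce Item~2 from Item~1 by a multiplicative scaling argument. Write $k=\kappa_1 b^{a_1-1}+\cdots+\kappa_v b^{a_v-1}$ with $a_1>\cdots>a_v\ge 1$ and $\kappa_i\in\{1,\ldots,b-1\}$. For a fixed $v$ with $1\le v\le \alpha-1$ we have $\mu_\alpha(k)=a_1+\cdots+a_v$, so summing over the $\kappa_i$ contributes a factor $(b-1)^v$ and leaves $\sum_{a_1>\cdots>a_v\ge 1}b^{-\lambda(a_1+\cdots+a_v)}$. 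The key device throughout is the substitution $a_i=d_i+d_{i+1}+\cdots+d_v$ with $d_j\ge 1$, under which $\sum_i a_i=\sum_j j\,d_j$, so the sum factorizes into geometric series $\prod_{j=1}^{v}\sum_{d_j\ge 1}b^{-\lambda j d_j}=\prod_{j=1}^{v}(b^{\lambda j}-1)^{-1}$. This produces exactly the first term $\sum_{v=1}^{\alpha-1}\prod_{i=1}^{v}(b-1)/(b^{\lambda i}-1)$ of $A_{\alpha,\lambda}$.

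The hard part will be the tail $v\ge \alpha$, which is where the hypothesis $\lambda>1/\alpha$ becomes essential. Here $\mu_\alpha(k)=a_1+\cdots+a_\alpha$ depends only on the top $\alpha$ positions, so I would fix those positions and digits, yielding the factor $(b-1)^\alpha b^{-\lambda(a_1+\cdots+a_\alpha)}$, and then observe that the lower positions $1,\ldots,a_\alpha-1$ may carry arbitrary digits in $\{0,\ldots,b-1\}$ without affecting $\mu_\alpha$, contributing the counting factor $b^{a_\alpha-1}$. Applying the same substitution to $a_1>\cdots>a_\alpha$ leaves the single series $\sum_{d_\alpha\ge 1}b^{(1-\lambda\alpha)d_\alpha}$, which converges precisely when $\lambda>1/\alpha$; summing it and absorbing the prefactor $b^{-1}$ gives $(b^{\lambda\alpha}-b)^{-1}$. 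A short rearrangement then matches this contribution with the second term $\frac{b^{\lambda\alpha}-1}{b^{\lambda\alpha}-b}\prod_{i=1}^{\alpha}(b-1)/(b^{\lambda i}-1)$ of $A_{\alpha,\lambda}$ (the factor $b^{\lambda\alpha}-1$ cancels against the $i=\alpha$ term of the product), completing Item~1.

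For Item~2 I would use that $b^n\mid k$ means every nonzero digit of $k$ lies at a position $>n$. Writing $k=b^n k'$ shifts all positions down by $n$, so $\mu_\alpha(b^n k')=\mu_\alpha(k')+n\min(v,\alpha)$ where $v\ge 1$ is the common number of nonzero digits, and the digit sum is preserved, so $\delta(k)=\delta(k')$. Since $\min(v,\alpha)\ge 1$ we get $\mu_\alpha(k)\ge \mu_\alpha(k')+n$, whence $\sum_{b^n\mid k}b^{-\lambda\mu_\alpha(k)}\le b^{-\lambda n}\sum_{k'\ge 1}b^{-\lambda\mu_\alpha(k')}=b^{-\lambda n}A_{\alpha,\lambda}$ by Item~1, which is the first bound. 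For the second, the extra constraint $\delta(k)=0$ forces $v\ge 2$, because a single nonzero digit has digit sum in $\{1,\ldots,b-1\}$ and can never be $\equiv 0\pmod b$; since $\alpha\ge 2$ this upgrades the estimate to $\mu_\alpha(k)\ge \mu_\alpha(k')+2n$. Combining this with $\delta(k')=0$ and Item~1 gives $\sum b^{-\lambda\mu_\alpha(k)}\le b^{-2\lambda n}\sum_{k'\ge 1}b^{-\lambda\mu_\alpha(k')}=b^{-2\lambda n}A_{\alpha,\lambda}$, as required. The only subtle points are ensuring the tail series converges (handled by $\lambda>1/\alpha$) and the clean bookkeeping of the free lower digits in the $v\ge\alpha$ case; the scaling argument for Item~2 is then routine.
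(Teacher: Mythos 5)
Your proof is correct and follows essentially the same route as the paper: decompose by the number $v$ of nonzero $b$-adic digits, evaluate the geometric sums over the ordered positions (your substitution $a_i=d_i+\cdots+d_v$ is exactly how the cited identity is obtained), handle the tail $v\ge\alpha$ using the fact that $\mu_\alpha$ only sees the top $\alpha$ positions, and derive Item~2 from the shift $\mu_\alpha(b^nk')=\mu_\alpha(k')+n\min(v,\alpha)$ together with the observation that $\delta(k)=0$ forces at least two nonzero digits. Your Item~2 is slightly more streamlined than the paper's (which re-expands the shifted sum term by term before bounding), but the underlying argument is identical.
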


\begin{proof}
Let us first consider Item~1 of the lemma. For $k\in \NN$, we denote its $b$-adic expansion by $k=\kappa_1b^{a_1-1}+\kappa_2b^{a_2-1}+\cdots +\kappa_vb^{a_v-1}$ with $a_1>a_2>\cdots >a_v>0$ and $\kappa_1,\ldots,\kappa_v\in \{1,\ldots,b-1\}$. We note that the value $\mu_{\alpha}(k)$ does not depend on $\kappa_1,\ldots,\kappa_v\in \{1,\ldots,b-1\}$. By arranging every element of $\NN$ according to the value of $v$ in its expansion, we have
  \begin{align}
  \sum_{k=1}^{\infty}b^{-\lambda \mu_{\alpha}(k)} & = \sum_{v=1}^{\infty}\sum_{a_1>\cdots > a_v>0}\sum_{\kappa_1,\ldots,\kappa_v\in \{1,\ldots,b-1\}}b^{-\lambda \mu_{\alpha}(\kappa_1b^{a_1-1}+\cdots +\kappa_vb^{a_v-1})} \nonumber \\
  & = \sum_{v=1}^{\infty}(b-1)^{v}\sum_{a_1>\cdots > a_v>0}b^{-\lambda \mu_{\alpha}(b^{a_1-1}+\cdots +b^{a_v-1})} \nonumber \\
  & = \sum_{v=1}^{\alpha-1}(b-1)^{v}\sum_{a_1>\cdots > a_v>0}b^{-\lambda (a_1+\cdots + a_v)} \label{eq:sum_weight1} \\
  & \qquad + \sum_{v=\alpha}^{\infty}(b-1)^{v}\sum_{a_1>\cdots > a_v>0}b^{-\lambda (a_1+\cdots + a_{\alpha})} . \label{eq:sum_weight2}
  \end{align}
As in the proof of \cite[Lemma~25]{GSY15} in which $2\lambda$ should be replaced by $\lambda$ here, for the inner sum of (\ref{eq:sum_weight1}) we have
  \begin{align*}
  \sum_{a_1>\cdots > a_v>0}b^{-\lambda (a_1+\cdots + a_v)} = \prod_{i=1}^{v}\left( \frac{1}{b^{\lambda i}-1}\right),
  \end{align*}
for any $0<\lambda\le 1$. Similarly for the double sum of (\ref{eq:sum_weight2}) we have
  \begin{align*}
  \sum_{v=\alpha}^{\infty}(b-1)^{v}\sum_{a_1>\cdots > a_v>0}b^{-\lambda (a_1+\cdots + a_{\alpha})} = \frac{b^{\lambda \alpha}-1}{b^{\lambda \alpha}-b}\prod_{i=1}^{\alpha}\left( \frac{b-1}{b^{\lambda i}-1}\right),
  \end{align*}
for any $1/\alpha<\lambda\le 1$. Here we note that the condition $\lambda > 1/\alpha$ is required for this double sum to be finite. Thus the result for Item~1 follows.

Let us move on to the first part of Item~2 of the lemma. If $b^n\mid k$ holds, $k$ is given in the form $lb^n$ for $l\in \NN$. Following an argument similar to the proof of Item~1, for any $1/\alpha<\lambda\le 1$ we have
  \begin{align*}
  \sum_{\substack{k=1\\ b^n\mid k}}^{\infty}b^{-\lambda \mu_{\alpha}(k)} & = \sum_{l=1}^{\infty}b^{-\lambda \mu_{\alpha}(lb^n)} \\
  & = \sum_{v=1}^{\infty}\sum_{a_1>\cdots > a_v>0}\sum_{\kappa_1,\ldots,\kappa_v\in \{1,\ldots,b-1\}}b^{-\lambda \mu_{\alpha}(\kappa_1b^{a_1+n-1}+\cdots +\kappa_vb^{a_v+n-1})} \nonumber \\
  & = \sum_{v=1}^{\infty}(b-1)^{v}\sum_{a_1>\cdots > a_v>0}b^{-\lambda \mu_{\alpha}(b^{a_1+n-1}+\cdots +b^{a_v+n-1})} \nonumber \\
  & = \sum_{v=1}^{\alpha-1}(b-1)^{v}b^{-\lambda vn}\sum_{a_1>\cdots > a_v>0}b^{-\lambda (a_1+\cdots + a_v)} \\
  & \qquad + b^{-\lambda \alpha n}\sum_{v=\alpha}^{\infty}(b-1)^{v}\sum_{a_1>\cdots > a_v>0}b^{-\lambda (a_1+\cdots + a_{\alpha})} \\
  & = \sum_{v=1}^{\alpha-1}\frac{1}{b^{\lambda vn}}\prod_{i=1}^{v}\left( \frac{b-1}{b^{\lambda i}-1}\right)+ \frac{1}{b^{\lambda \alpha n}}\frac{b^{\lambda \alpha}-1}{b^{\lambda \alpha}-b}\prod_{i=1}^{\alpha}\left( \frac{b-1}{b^{\lambda i}-1}\right) \\
  & \le \frac{1}{b^{\lambda n}}\left[ \sum_{v=1}^{\alpha-1}\prod_{i=1}^{v}\left( \frac{b-1}{b^{\lambda i}-1}\right)+ \frac{b^{\lambda \alpha}-1}{b^{\lambda \alpha}-b}\prod_{i=1}^{\alpha}\left( \frac{b-1}{b^{\lambda i}-1}\right) \right] = \frac{A_{\alpha,\lambda}}{b^{\lambda n}},
  \end{align*}
where the last inequality stems from the condition $\alpha \ge 2$. Thus the result for the first part of Item~2 follows.

Finally let us consider the second part of Item~2 of the lemma. Again if $b^n\mid k$ holds, $k$ is given in the form $lb^n$ for $l\in \NN$. Moreover, we have $\delta(lb^n)=\delta(l)$ for any $l\in \NN$, and if $\delta(l)=0$ holds, the $b$-adic expansion of $l$ has to contain at least two non-zero digits. Thus for any $1/\alpha<\lambda\le 1$ we have
  \begin{align*}
  \sum_{\substack{k=1\\ \delta(k)=0\\ b^n\mid k}}^{\infty}b^{-\lambda \mu_{\alpha}(k)} & = \sum_{\substack{l=1\\ \delta(l)=0}}^{\infty}b^{-\lambda \mu_{\alpha}(lb^n)} \\
  & \le \sum_{v=2}^{\infty}\sum_{a_1>\cdots > a_v>0}\sum_{\kappa_1,\ldots,\kappa_v\in \{1,\ldots,b-1\}}b^{-\lambda \mu_{\alpha}(\kappa_1b^{a_1+n-1}+\cdots +\kappa_vb^{a_v+n-1})} \nonumber \\
  & = \sum_{v=2}^{\alpha-1}\frac{1}{b^{\lambda vn}}\prod_{i=1}^{v}\left( \frac{b-1}{b^{\lambda i}-1}\right)+ \frac{1}{b^{\lambda \alpha n}}\frac{b^{\lambda \alpha}-1}{b^{\lambda \alpha}-b}\prod_{i=1}^{\alpha}\left( \frac{b-1}{b^{\lambda i}-1}\right) \\
  & \le \frac{1}{b^{2\lambda n}}\left[ \sum_{v=2}^{\alpha-1}\prod_{i=1}^{v}\left( \frac{b-1}{b^{\lambda i}-1}\right)+ \frac{b^{\lambda \alpha}-1}{b^{\lambda \alpha}-b}\prod_{i=1}^{\alpha}\left( \frac{b-1}{b^{\lambda i}-1}\right) \right] \le \frac{A_{\alpha,\lambda}}{b^{2\lambda n}}.
  \end{align*}
Thus the result for the second part of Item~2 follows.
\end{proof}

Since the reproducing kernel $K_{\alpha,\bsgamma}$ is continuous and satisfies the conditions $\int_{[0,1]^s}\sqrt{K_{\alpha,\bsgamma}(\bsx,\bsx)}\, \rd \bsx< \infty$ and $\sum_{\bsk,\bsl\in \NN_0^s}|\hat{K}_{\alpha,\bsgamma}(\bsk,\bsl)|<\infty$ as shown in \cite[Section~4.1]{GSYxx}, we can apply Theorem~\ref{thm:QMC_error_RKHS}. Using the bound on the Walsh coefficients $\hat{K}_{\alpha,\bsgamma}(\cdot,\cdot)$ shown by Baldeaux and Dick in \cite[Section~3.1]{BD09} together with the triangle inequality, we have the following. Since the proof is almost the same as that used in \cite[Theorem~23]{GSYxx}, we omit it.

\begin{lemma}\label{lem:bound_worst-case}
For an integer $\alpha \ge 2$ and a set of weights $\bsgamma$, let $H_{\alpha,\bsgamma}$ be the weighted Sobolev space. For $m,n\in \NN$ with $n\ge m$, let $p\in \ZZ_b[x]$ with $\deg(p)=n$ and $\bsq=(q_1,\ldots,q_s)\in (\ZZ_b[x])^s$ with $\deg(q_j)<n$. Then the worst-case error of $\pi(\Pcal_{\ant}(p,\bsq^{*}))$ in $H_{\alpha,\bsgamma}$ can be bounded by
\begin{align*}
e^{\wor}(H_{\alpha,\bsgamma};\pi(\Pcal_{\ant}(p,\bsq^{*}))) \le \sum_{\emptyset \ne u\subseteq \{1:s\}}\gamma_u^{1/2}D_{\alpha}^{|u|/2}\sum_{\substack{ \bsk_u\in \NN^{|u|}\\ (\bsk_u,\bszero)\in \Pcal^{\perp}(p,\bsq)\\ \delta(\bsk_u)=0}}b^{-\mu_{\alpha}(\bsk_u)},
\end{align*}
where $D_{\alpha}>0$ is given by
\begin{align*}
D_{\alpha} = \max_{1\le v\le \alpha}\left\{ \sum_{\tau=v}^{\alpha}\frac{C_{\tau}^2}{b^{2(\tau-v)}}+ \frac{2C_{2\alpha}}{b^{2(\alpha-v)}}\right\},
\end{align*}
with
\begin{align*}
C_{1} & = \frac{1}{2\sin(\pi/b))} \quad \text{and}\quad C_{\tau} = \frac{(1+1/b+1/(b(b+1)))^{\tau-2}}{(2\sin(\pi/b))^{\tau}} \quad \text{for $\tau\ge 2$}.
\end{align*}
\end{lemma}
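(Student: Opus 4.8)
The plan is to combine the exact worst-case error identity of Theorem~\ref{thm:QMC_error_RKHS} with the product structure of the kernel $K_{\alpha,\bsgamma}$ and the univariate Walsh coefficient estimates of Baldeaux and Dick. Since $K_{\alpha,\bsgamma}$ is continuous and satisfies $\int_{[0,1]^s}\sqrt{K_{\alpha,\bsgamma}(\bsx,\bsx)}\,\rd\bsx<\infty$ together with $\sum_{\bsk,\bsl}|\hat{K}_{\alpha,\bsgamma}(\bsk,\bsl)|<\infty$ (as recorded in \cite{GSYxx}), I may apply Theorem~\ref{thm:QMC_error_RKHS} to $\Pcal=\Pcal(p,\bsq)$ and then invoke the triangle inequality exactly as in Remark~\ref{rem:QMC_error_RKHS}, which gives
\[
\left(e^{\wor}(H_{\alpha,\bsgamma};\pi(\Pcal_{\ant}(p,\bsq)))\right)^2 \le \sum_{\substack{\bsk,\bsl\in\Pcal^{\perp}(p,\bsq)\setminus\{\bszero\}\\ \delta(\bsk)=\delta(\bsl)=0}}\left|\hat{K}_{\alpha,\bsgamma}(\bsk,\bsl)\right|.
\]
Everything then reduces to estimating the Walsh coefficients $\hat{K}_{\alpha,\bsgamma}(\bsk,\bsl)$ and organising the resulting double sum.

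Next I would exploit that $K_{\alpha,\bsgamma}(\bsx,\bsy)=\sum_{u\subseteq\{1:s\}}\gamma_u\prod_{j\in u}g_\alpha(x_j,y_j)$, where $g_\alpha(x,y):=\sum_{r=1}^{\alpha}\Bcal_r(x)\Bcal_r(y)/(r!)^2+(-1)^{\alpha+1}\Bcal_{2\alpha}(|x-y|)/(2\alpha)!$ is the univariate factor. Because the Walsh coefficient of a tensor product factorises coordinatewise, I obtain $\hat{K}_{\alpha,\bsgamma}(\bsk,\bsl)=\sum_{u}\gamma_u\prod_{j\in u}\hat{g}_\alpha(k_j,l_j)$, where for $j\notin u$ the trivial factor contributes the orthogonality condition forcing $k_j=l_j=0$. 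The key simplification is that $\int_0^1 g_\alpha(x,y)\,\rd y=0$ for every $x$ (each $\Bcal_r$ with $r\ge1$ integrates to zero, and $\int_0^1\Bcal_{2\alpha}(|x-y|)\,\rd y=0$ by the reflection symmetry of Bernoulli polynomials), and symmetrically in $x$; hence $\hat{g}_\alpha(k,0)=\hat{g}_\alpha(0,l)=0$ for all $k,l$. Consequently $\hat{K}_{\alpha,\bsgamma}(\bsk,\bsl)$ vanishes unless $\bsk$ and $\bsl$ share the same support $u$ (nonzero on $u$, zero off $u$), in which case it equals $\gamma_u\prod_{j\in u}\hat{g}_\alpha(k_j,l_j)$. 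Writing $\bsk=(\bsk_u,\bszero)$ and $\bsl=(\bsl_u,\bszero)$, the double sum splits over the nonempty sets $u$.

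Then I would insert the bound of Baldeaux and Dick \cite[Section~3.1]{BD09} on the univariate Walsh coefficients, namely $|\hat{g}_\alpha(k,l)|\le D_\alpha\, b^{-\mu_\alpha(k)}b^{-\mu_\alpha(l)}$ with $D_\alpha$ as in the statement, so that $\prod_{j\in u}|\hat{g}_\alpha(k_j,l_j)|\le D_\alpha^{|u|}b^{-\mu_\alpha(\bsk_u)-\mu_\alpha(\bsl_u)}$. Since the constraints $(\bsk_u,\bszero)\in\Pcal^{\perp}(p,\bsq)$, $\delta(\bsk_u)=0$ and the analogous ones on $\bsl_u$ act separately and identically, the inner double sum factorises as the square of $S_u:=\sum_{(\bsk_u,\bszero)\in\Pcal^{\perp}(p,\bsq),\,\delta(\bsk_u)=0}b^{-\mu_\alpha(\bsk_u)}$, giving
\[
\left(e^{\wor}(H_{\alpha,\bsgamma};\pi(\Pcal_{\ant}(p,\bsq)))\right)^2 \le \sum_{\emptyset\neq u\subseteq\{1:s\}}\gamma_u D_\alpha^{|u|} S_u^2 .
\]
Taking square roots and applying the subadditivity $\sqrt{\sum_i a_i}\le\sum_i\sqrt{a_i}$ for nonnegative terms yields precisely the claimed bound, with $\gamma_u^{1/2}D_\alpha^{|u|/2}$ in front of each $S_u$.

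The routine parts here are the coordinatewise factorisation and the elementary square-root inequality. The genuine content, and the main obstacle, is the univariate estimate $|\hat{g}_\alpha(k,l)|\le D_\alpha b^{-\mu_\alpha(k)-\mu_\alpha(l)}$, which must be extracted from the Bernoulli-polynomial Walsh coefficient bounds of \cite{BD09} and is exactly what produces the stated form of $D_\alpha$ in terms of $C_\tau$ and $C_{2\alpha}$. Because this computation parallels that in \cite[Theorem~23]{GSYxx}, the only additional point to check is that imposing the digit restrictions $\delta(\bsk)=\delta(\bsl)=0$ leaves the factorisation intact, which it does since these conditions constrain $\bsk_u$ and $\bsl_u$ independently.
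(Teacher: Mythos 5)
Your proposal is correct and follows essentially the same route the paper sketches: apply Theorem~\ref{thm:QMC_error_RKHS} with the triangle inequality as in Remark~\ref{rem:QMC_error_RKHS}, exploit the tensor-product structure of $K_{\alpha,\bsgamma}$, insert the Baldeaux--Dick bound from \cite[Section~3.1]{BD09}, and factorise the double sum into squares before taking square roots, which is precisely the argument of \cite[Theorem~23]{GSYxx} that the paper cites and omits. Your filled-in details (the vanishing $\hat{g}_\alpha(k,0)=\hat{g}_\alpha(0,l)=0$ forcing common support $u$, the factorisation into $S_u^2$, and the subadditivity of the square root) are exactly the steps that reconstruction requires.
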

\noindent
In the following, we simply write the bound on $e^{\wor}(H_{\alpha,\bsgamma};\pi(\Pcal_{\ant}(p,\bsq^{*})))$ given in the above lemma as
\begin{align*}
B_{\alpha,\bsgamma}(p,\bsq) = \sum_{\emptyset \ne u\subseteq \{1:s\}}\gamma_u^{1/2}D_{\alpha}^{|u|/2}\sum_{\substack{ \bsk_u\in \NN^{|u|}\\ (\bsk_u,\bszero)\in \Pcal^{\perp}(p,\bsq)\\ \delta(\bsk_u)=0}}b^{-\mu_{\alpha}(\bsk_u)}.
\end{align*}

Now we are ready to prove Theorem~\ref{thm:existence}. 

\begin{proof}[Proof of Theorem~\ref{thm:existence}]
Let us define
\begin{align*}
R_{b,n} := \{q\in \ZZ_b[x]\colon \deg(q)<n \} ,
\end{align*}
and let $\bsq^{*}$ in Theorem~\ref{thm:existence} be given by
\begin{align*}
\bsq^{*} = \argmin_{\bsq\in R_{b,n}^s}B_{\alpha,\bsgamma}(p,\bsq).
\end{align*}
Due to the averaging argument and Jensen's inequality $(\sum_{k}a_k)^{\lambda}\le \sum_{k}a_k^{\lambda}$ for $0<\lambda\le 1$ with $a_k\ge 0$, we have
\begin{align*}
\left(B_{\alpha,\bsgamma}(p,\bsq^{*})\right)^{\lambda} & \le \frac{1}{b^{ns}}\sum_{\bsq\in R_{b,n}^s}\left(B_{\alpha,\bsgamma}(p,\bsq)\right)^{\lambda} \\
& \le \frac{1}{b^{ns}}\sum_{\bsq\in R_{b,n}^s}\sum_{\emptyset \ne u\subseteq \{1,\ldots,s\}}\gamma_u^{\lambda/2}D_{\alpha}^{\lambda |u|/2}\sum_{\substack{ \bsk_u\in \NN^{|u|}\\ (\bsk_u,\bszero)\in P^{\perp}(p,\bsq)\\ \delta(\bsk_u)=0}}b^{-\lambda\mu_{\alpha}(\bsk_u)} \\
& = \sum_{\emptyset \ne u\subseteq \{1,\ldots,s\}}\gamma_u^{\lambda/2}D_{\alpha}^{\lambda |u|/2}\sum_{\substack{ \bsk_u\in \NN^{|u|}\\ \delta(\bsk_u)=0}}b^{-\lambda\mu_{\alpha}(\bsk_u)} \\
& \qquad \times \frac{1}{b^{n|u|}}\sum_{\substack{\bsq_u\in R_{b,n}^{|u|} \\ \tr_n(\bsk_u)\cdot\bsq_u \equiv a \pmod p\\ \deg(a)<n-m}}1 ,
\end{align*}
for any $0<\lambda\le 1$. From the result shown in \cite[Section~4]{DP07}, the innermost sum of the last expression is given by
\begin{align*}
\frac{1}{b^{n|u|}}\sum_{\substack{\bsq_u\in R_{b,n}^{|u|} \\ \tr_n(\bsk_u)\cdot\bsq_u \equiv a \pmod p\\ \deg(a)<n-m}}1 = \begin{cases}
1 & \text{if $b^n\mid k_j$ for all $j\in u$,}\\
\frac{1}{b^m} & \text{otherwise.} \\
\end{cases}
\end{align*}
Thus we have
\begin{align}
\left(B_{\alpha,\bsgamma}(p,\bsq^{*})\right)^{\lambda} & \le \sum_{\emptyset \ne u\subseteq \{1,\ldots,s\}}\gamma_u^{\lambda/2}D_{\alpha}^{\lambda |u|/2}\sum_{\substack{ \bsk_u\in \NN^{|u|}\\ \delta(\bsk_u)=0 \\ b^n \mid k_j, \forall j\in u}}b^{-\lambda\mu_{\alpha}(\bsk_u)} \nonumber \\
& \qquad + \frac{1}{b^m}\sum_{\emptyset \ne u\subseteq \{1,\ldots,s\}}\gamma_u^{\lambda/2}D_{\alpha}^{\lambda |u|/2}\sum_{\substack{ \bsk_u\in \NN^{|u|}\\ \delta(\bsk_u)=0 \\ \exists j\in u\colon b^n \nmid k_j}}b^{-\lambda\mu_{\alpha}(\bsk_u)}. \label{eq:main_proof1}
\end{align}
In the following, let $1/\alpha <\lambda\le 1$. The inner sum of the first term on the right-hand side of (\ref{eq:main_proof1}) is bounded above as follows: For $u\subseteq \{1,\ldots,s\}$ with $|u|\ge 2$, we have
\begin{align*}
\sum_{\substack{ \bsk_u\in \NN^{|u|}\\ \delta(\bsk_u)=0 \\ b^n \mid k_j, \forall j\in u}}b^{-\lambda\mu_{\alpha}(\bsk_u)} & \le \sum_{\substack{ \bsk_u\in \NN^{|u|}\\ b^n \mid k_j, \forall j\in u}}b^{-\lambda\mu_{\alpha}(\bsk_u)} \\
& = \left[ \sum_{\substack{k\in \NN\\ b^n \mid k}}b^{-\lambda\mu_{\alpha}(k)} \right]^{|u|} \le \frac{A_{\alpha,\lambda}^{|u|}}{b^{\lambda |u|n}}\le \frac{A_{\alpha,\lambda}^{|u|}}{b^{2\lambda n}} ,
\end{align*}
where we used the first part of Item~2 in Lemma~\ref{lem:dick_weight} in the second inequality. For $u\subseteq \{1,\ldots,s\}$ with $|u|=1$, by using the second part of Item~2 in Lemma~\ref{lem:dick_weight}, we have
\begin{align*}
\sum_{\substack{ k\in \NN \\ \delta(k)=0 \\ b^n \mid k}}b^{-\lambda\mu_{\alpha}(k)} \le \frac{A_{\alpha,\lambda}}{b^{2\lambda n}}.
\end{align*}
By using Item~1 in Lemma~\ref{lem:dick_weight}, the inner sum of the second term on the right-hand side of (\ref{eq:main_proof1}) can be bounded by
\begin{align*}
\sum_{\substack{ \bsk_u\in \NN^{|u|}\\ \delta(\bsk_u)=0 \\ \exists j\in u\colon b^n \nmid k_j}}b^{-\lambda\mu_{\alpha}(\bsk_u)} \le \sum_{\bsk_u\in \NN^{|u|}}b^{-\lambda\mu_{\alpha}(\bsk_u)} = \left[ \sum_{k\in \NN}b^{-\lambda\mu_{\alpha}(k)} \right]^{|u|} = A_{\alpha,\lambda}^{|u|} .
\end{align*}
Since we now have
\begin{align*}
\left(B_{\alpha,\bsgamma}(p,\bsq^{*})\right)^{\lambda} & \le \frac{1}{b^{2\lambda n}} \sum_{\emptyset \ne u\subseteq \{1,\ldots,s\}}\gamma_u^{\lambda/2}D_{\alpha}^{\lambda |u|/2}A_{\alpha,\lambda}^{|u|} \\
& \qquad + \frac{1}{b^m}\sum_{\emptyset \ne u\subseteq \{1,\ldots,s\}}\gamma_u^{\lambda/2}D_{\alpha}^{\lambda |u|/2}A_{\alpha,\lambda}^{|u|} \\
& \le \frac{1}{b^{\min(m,2\lambda n)}}\sum_{\emptyset \ne u\subseteq \{1,\ldots,s\}}2\gamma_u^{\lambda/2}D_{\alpha}^{\lambda |u|/2}A_{\alpha,\lambda}^{|u|},
\end{align*}
the worst-case error of QMC integration using $\pi(\Pcal_{\ant}(p,\bsq^{*}))$ can be bounded by
\begin{align*}
e^{\wor}(H_{\alpha,\bsgamma};\pi(\Pcal_{\ant}(p,\bsq^{*}))) & \le B_{\alpha,\bsgamma}(p,\bsq^{*}) \\
& \le \frac{1}{b^{\min(m/\lambda, 2n)}}\left[\sum_{\emptyset \ne u\subseteq \{1,\ldots,s\}}2\gamma_u^{\lambda/2}D_{\alpha}^{\lambda |u|/2}A_{\alpha,\lambda}^{|u|}\right]^{1/\lambda},
\end{align*}
which completes the proof by setting $C_{\alpha,\lambda}=2D_{\alpha}^{\lambda /2}A_{\alpha,\lambda}$.
\end{proof}
\section{Numerical experiments}\label{sec:numer}
Finally we conduct some numerical experiments up to $s=100$ based on Sobol' point sets, which are a special construction of digital nets over $\ZZ_2$.
Our purpose here is to compare the performances of Sobol' point sets with and without dyadic antithetics.
As can be expected from Remarks~\ref{rem:QMC_error_f} and \ref{rem:QMC_error_RKHS}, Sobol' point sets with antithetics may perform well at least as compared to those without antithetics.
We consider the following three test functions:
\begin{align*}
f_1(\bsx) & = \exp\left( \theta \sum_{j=1}^{s}\frac{x_j}{j^{\zeta}}\right) \; \text{for $\theta,\zeta>0$},\\
f_2(\bsx) & = \prod_{j=1}^{s}\left( 1+\frac{w^{j}}{21}\left( -10+42x_j^2-42x_j^5+21x_j^6\right)\right) \; \text{for $w>0$}, \\
f_3(\bsx) & = \prod_{j=1}^{s}\Big( 1+\frac{w^{j}}{8}( 31-84x_j^2+8x_j^3+70x_j^4-28x_j^6+8x_j^7 \\
          & \quad \qquad  -16\cos(1)-16\sin(x_j))\Big) \; \text{for $w>0$}.
\end{align*}
The first one was used in \cite{GSxx}, whereas the latter two were in \cite{DNP14}.
We choose these smooth functions so that the conditions (continuity and summability of the Walsh coefficients) on $f$ given in Theorem~\ref{thm:QMC_error_f} are satisfied.
The parameters $\xi$ (in $f_1$) and $w$ (in $f_2$ and $f_3$) play a role in moderating the importance of different variables or groups of variables. Since $I(f_i)$ is known exactly for all $i=1,2,3$, we consider the absolute integration error $|I(f_i;P)-I(f_i)|$.

Figure~\ref{fig:1} shows the absolute integration errors for $f_1$ as functions of the number of points with $\theta=0.1$ and $(s,\zeta)=(10,1),(10,2),(100,1),(100,2)$. In each graph, the line marked by triangle represents the integration error when using Sobol' point sets without antithetics. For all the cases, the error converges almost exactly with order $N^{-1}$. The line marked by circle represents the integration error when using Sobol' point sets with antithetics. For all the cases, the error converges with order around $N^{-1.35}$, which is faster than $N^{-1}$.

Figure~\ref{fig:2} shows the absolute integration errors for $f_2$ and $f_3$ as functions of the number of points with $s=100$ and $\omega=0.5,0.1$. Again for all the cases, the error when using Sobol' point sets without antithetics converges almost exactly with order $N^{-1}$. Regardless of $f_2$ or $f_3$, the error when using Sobol' point sets with antithetics converges with order around $N^{-1.35}$ for $\omega=0.5$, whereas it converges with order around $N^{-1.60}$ for $\omega=0.1$. For the case $\omega=0.1$, the erratic convergence behavior is observed for Sobol' point sets with antithetics, which is though elusive and requires further work to address.

\section*{Acknowledgments}
This work was supported by JSPS Grant-in-Aid for Young Scientists No.15K20964.

\begin{figure}
\begin{center}
\includegraphics[width=0.45\textwidth]{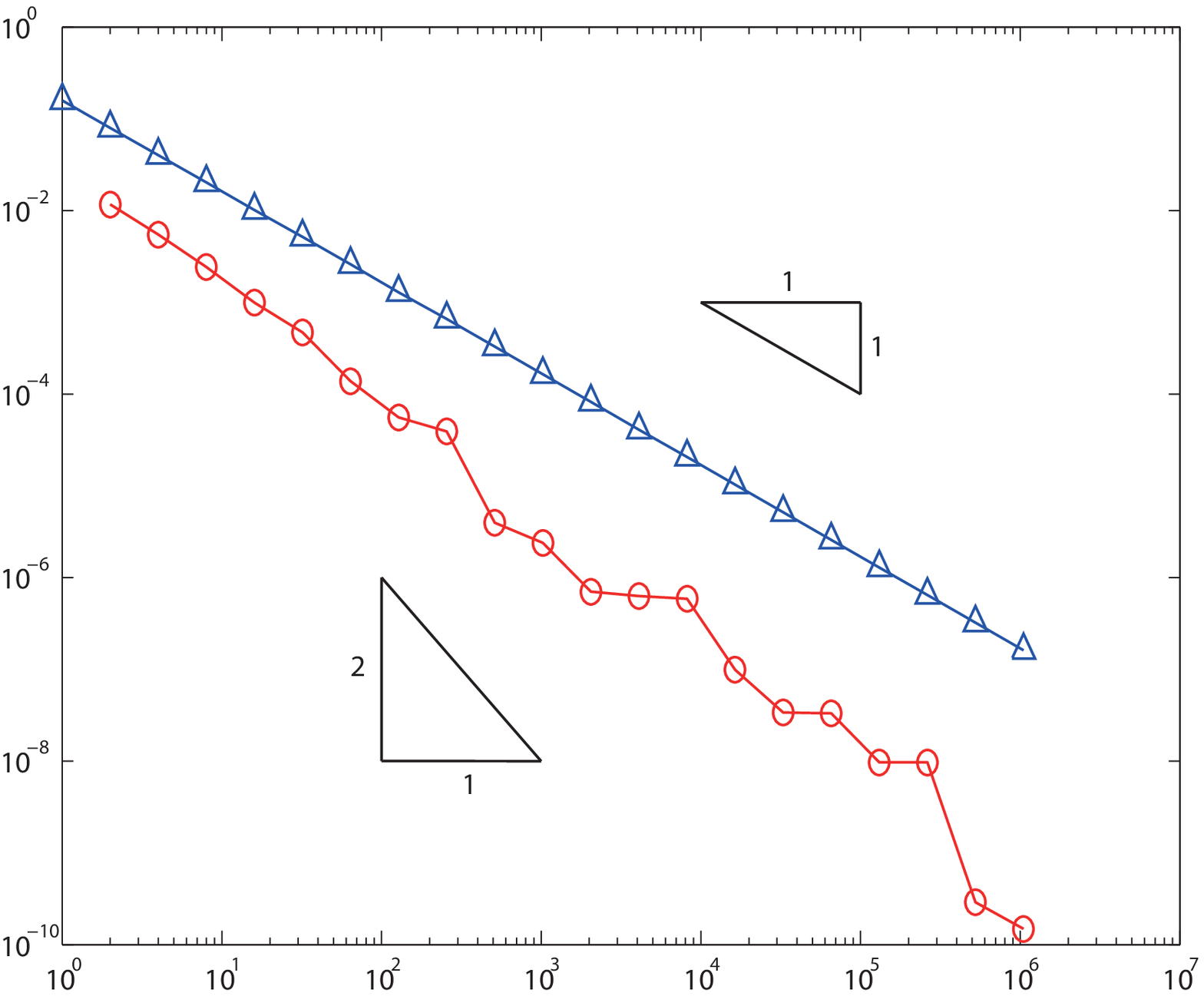}
\includegraphics[width=0.45\textwidth]{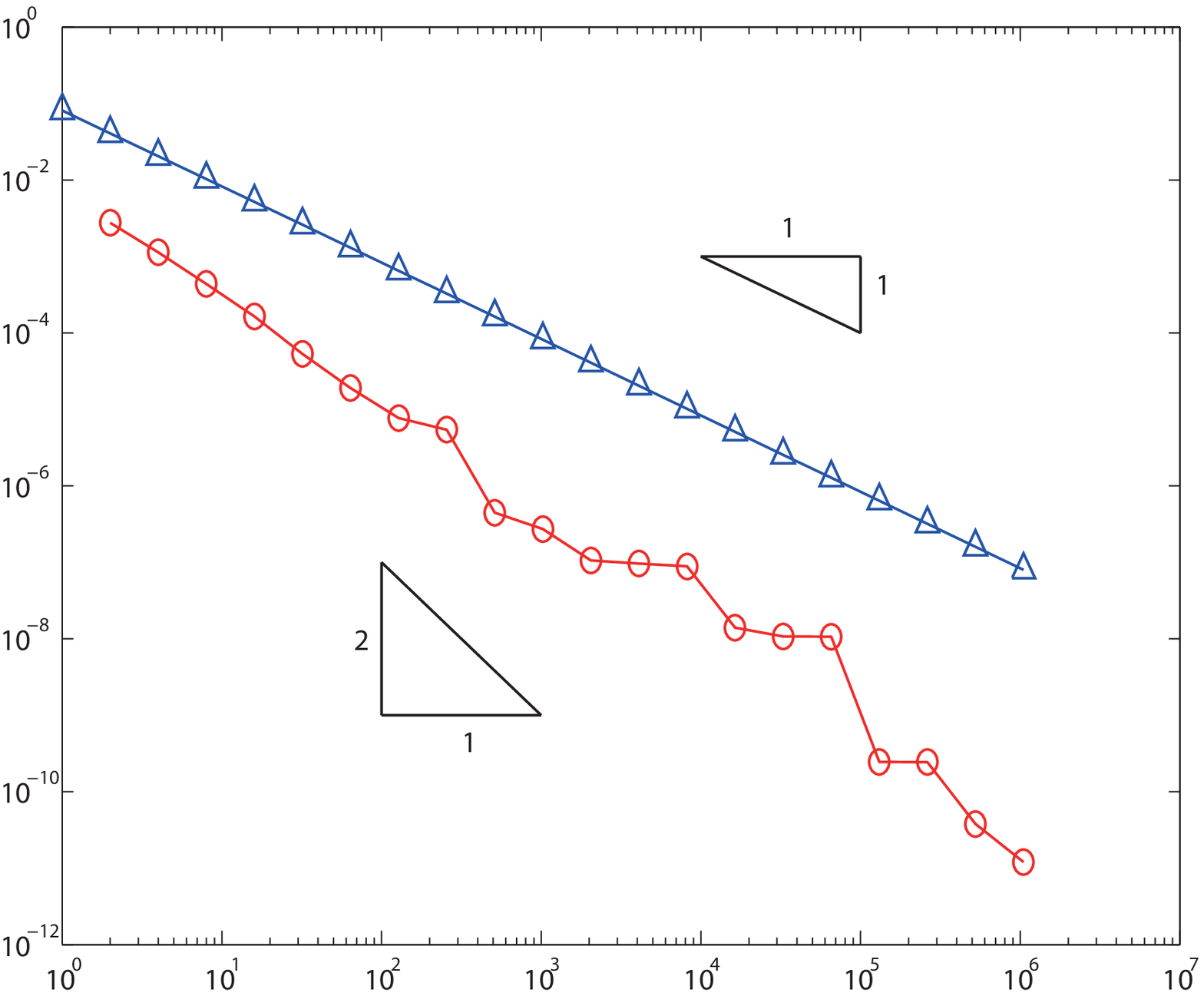}\\
\includegraphics[width=0.45\textwidth]{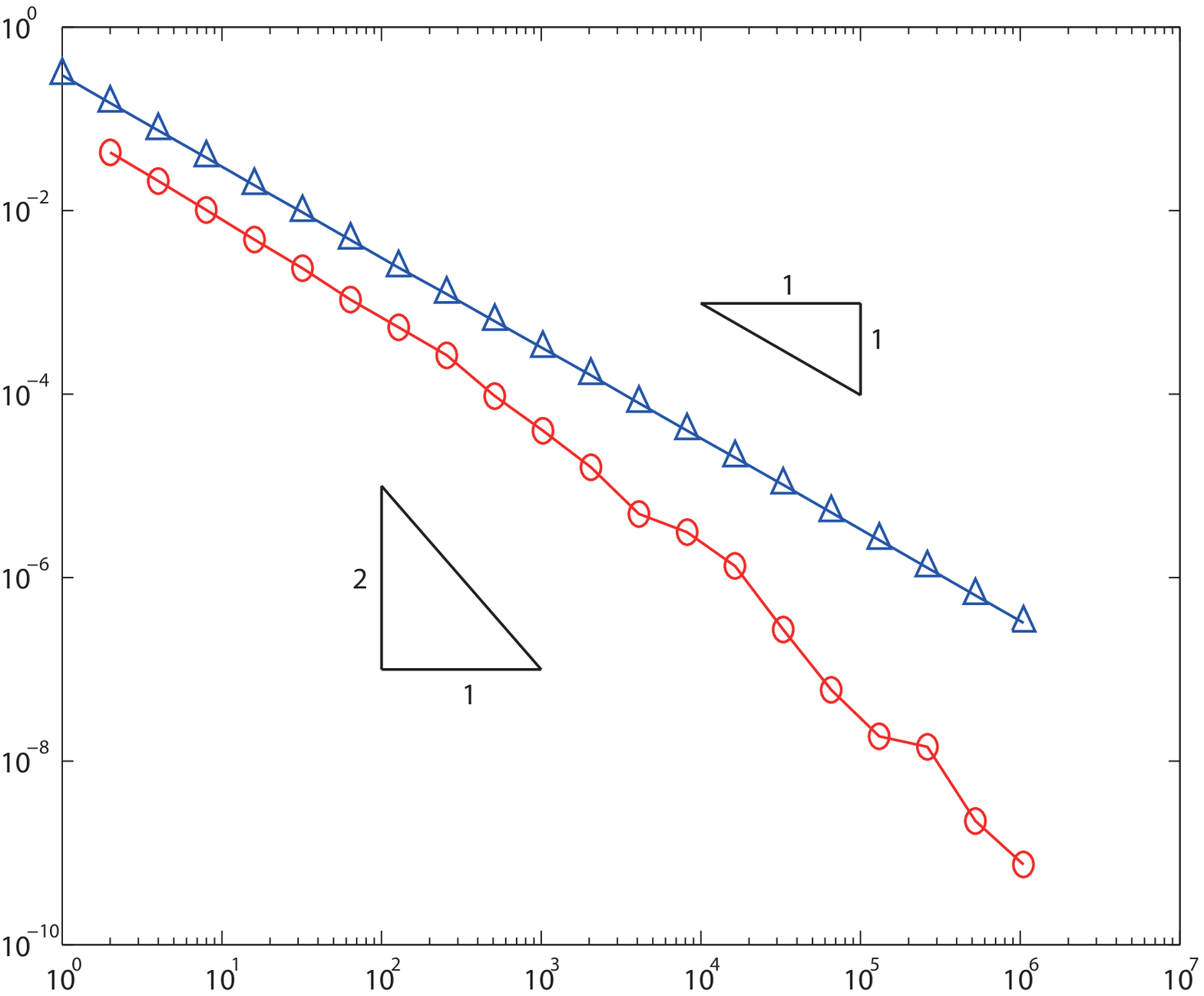}
\includegraphics[width=0.45\textwidth]{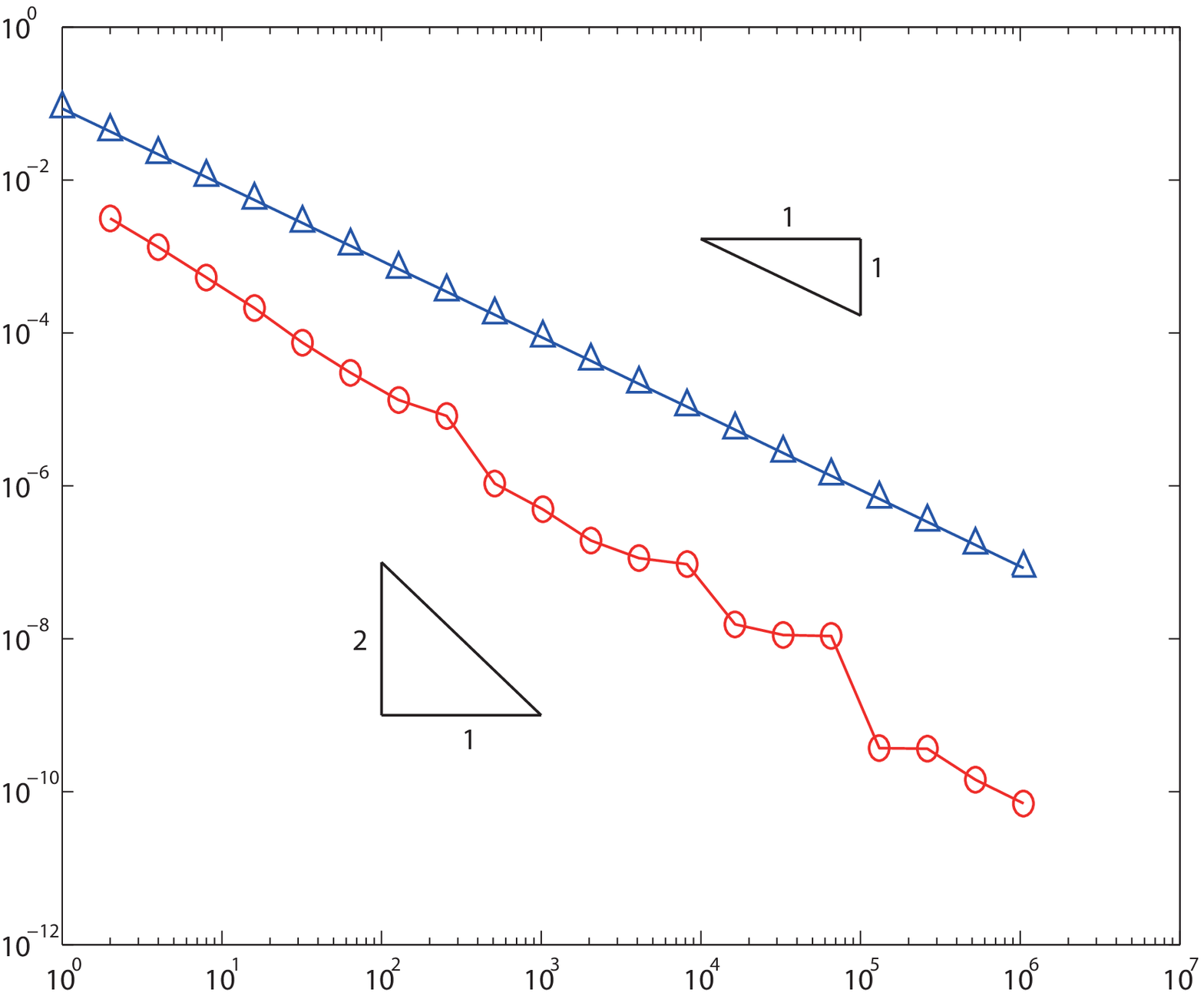}
\caption{Absolute integration error for $f_1$ vs number of points. The lines marked by circle and triangle represent the errors using QMC rules based on a Sobol' point set with and without dyadic antithetics, respectively. (Left top: $s=10, \zeta=1$, right top: $s=10, \zeta=2$, left bottom: $s=100, \zeta=1$, right bottom: $s=100, \zeta=2$)}
\label{fig:1}
\end{center}
\end{figure}

\begin{figure}
\begin{center}
\includegraphics[width=0.45\textwidth]{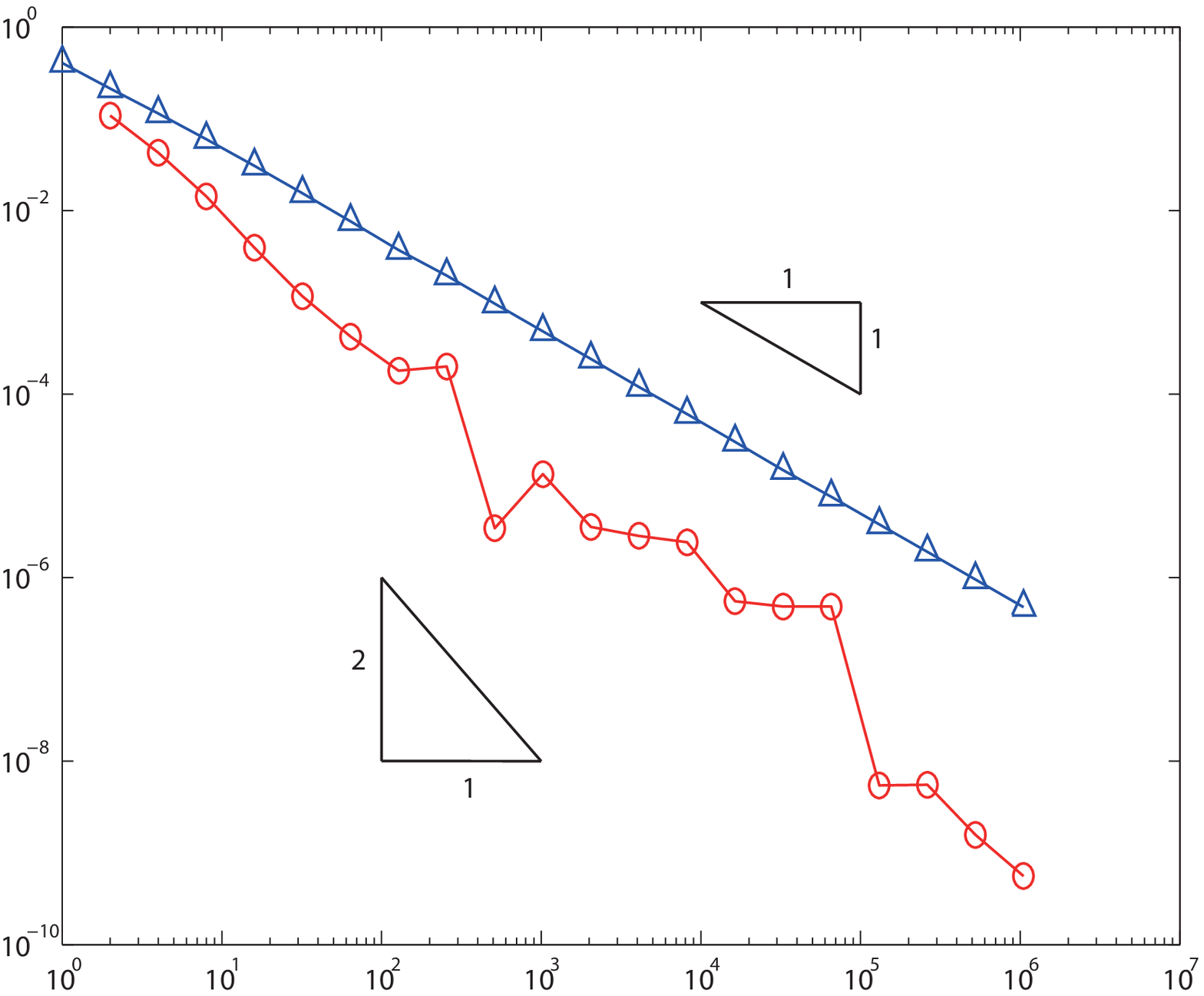}
\includegraphics[width=0.45\textwidth]{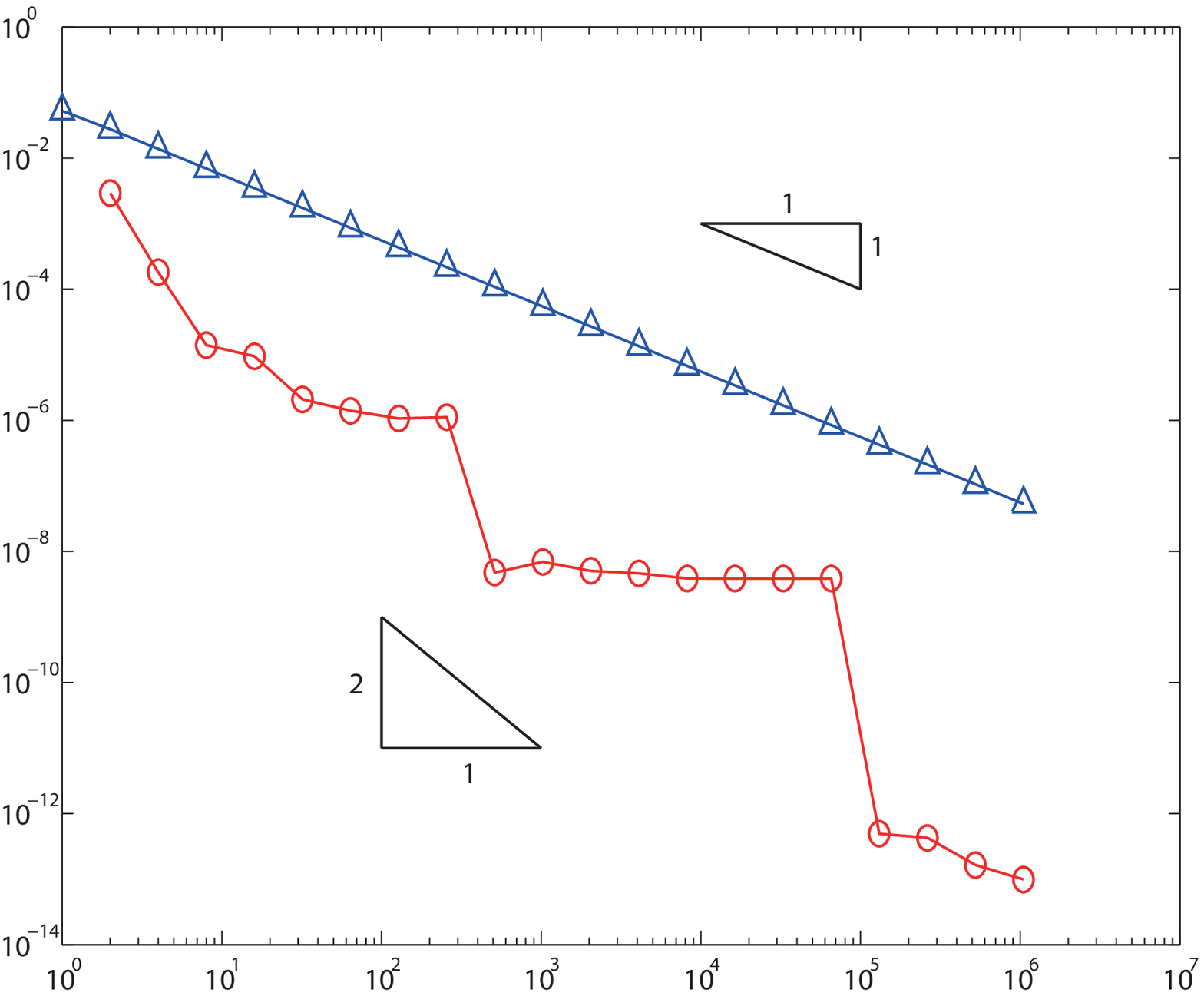}\\
\includegraphics[width=0.45\textwidth]{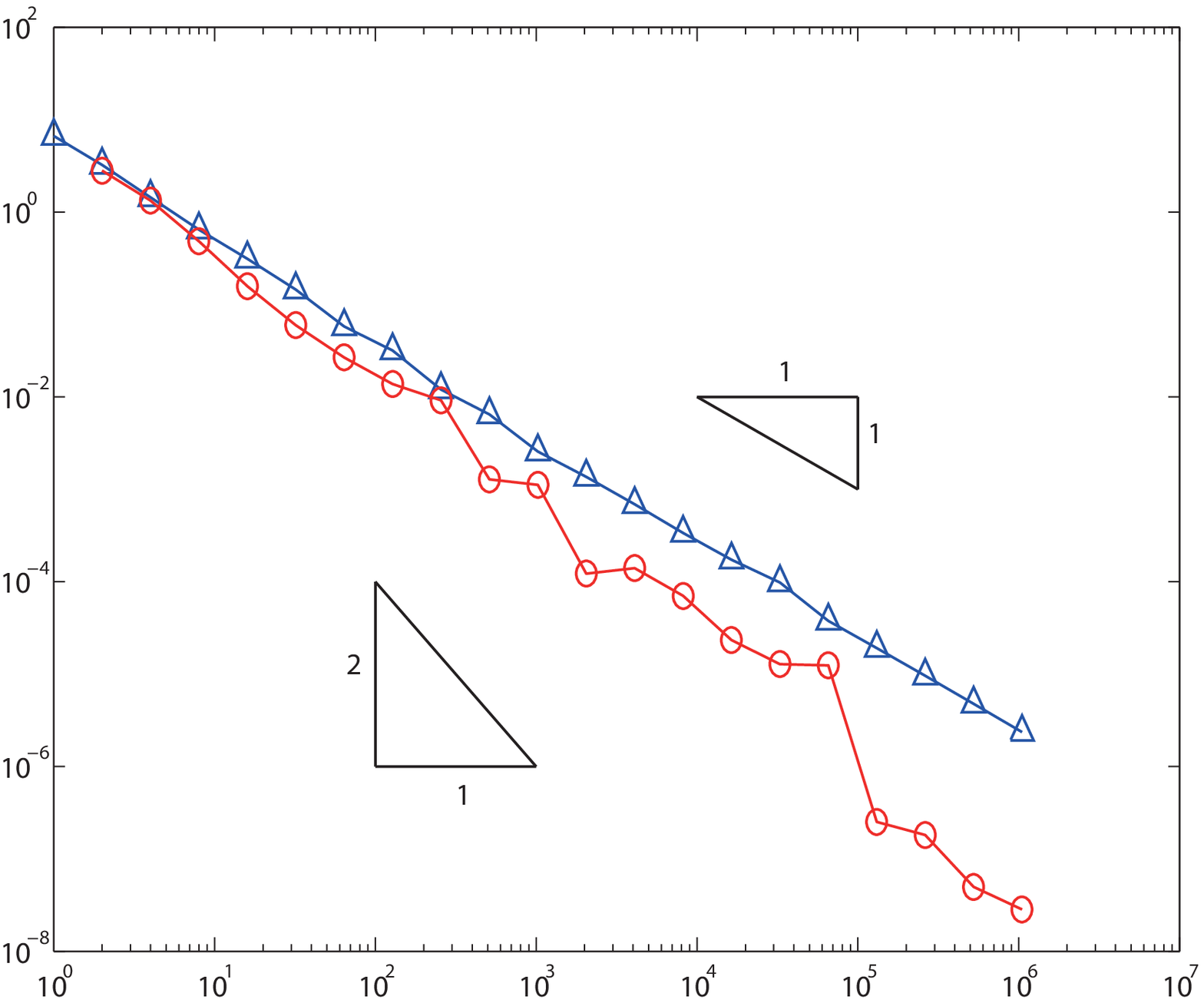}
\includegraphics[width=0.45\textwidth]{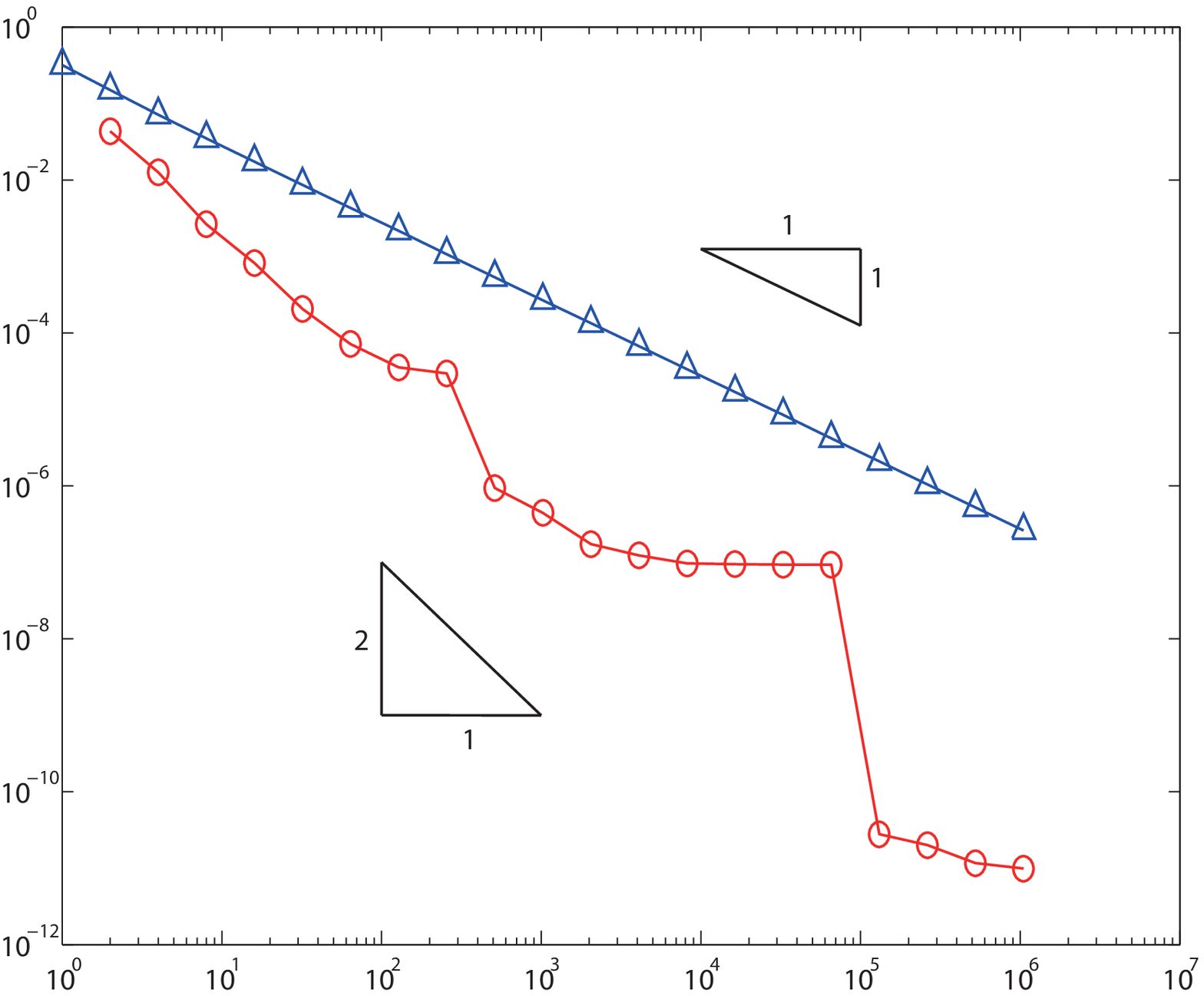}
\caption{Absolute integration error for $f_2$ (top) and $f_3$ (bottom) vs number of points. The lines marked by circle and triangle represent the errors using QMC rules based on a Sobol' point set with and without dyadic antithetics, respectively. (Left top: $s=100, \omega=0.5$, right top: $s=100, \omega=0.1$, left bottom: $ s=100, \omega=0.5$, right bottom: $s=100, \omega=0.1$)}
\label{fig:2}
\end{center}
\end{figure}

\end{document}